\documentclass[leqno,11pt]{amsart} 

\setlength{\textheight}{23cm}
\setlength{\textwidth}{16cm}
\setlength{\oddsidemargin}{0cm}
\setlength{\evensidemargin}{0cm}
\setlength{\topmargin}{0cm}
\usepackage{amssymb}
%
%
%
\theoremstyle{plain} 
\newtheorem{theorem}{\indent\sc Theorem}[section]
\newtheorem{lemma}[theorem]{\indent\sc Lemma}
\newtheorem{corollary}[theorem]{\indent\sc Corollary}
\newtheorem{proposition}[theorem]{\indent\sc Proposition}

\theoremstyle{definition} 

\newtheorem{remark}[theorem]{\indent\sc Remark}

%

%

\def\C{{\mathbf{C}}}
\def\R{{\mathbf{R}}}
\def\L{{\mathbf{L}}}
\def\Pi{{\mathbf{P}}}

\begin{document}

\title[A survey on Bernstein-type theorems]{A survey on Bernstein-type theorems for \\
entire graphical surfaces} 

\author[Y.~Kawakami]{Yu Kawakami} 



\renewcommand{\thefootnote}{\fnsymbol{footnote}}
\footnote[0]{2020\textit{ Mathematics Subject Classification}.
Primary 53A10; Secondary 35J93, 53C24.}
\keywords{
Bernstein theorem, entire graph, Heinz mean curvature estimate}
\thanks{
This work was supported by JSPS KAKENHI Grant Number JP19K03463 and JP23K03086. 
}

\address{
Faculty of Mathematics and Physics, \endgraf
Kanazawa University \endgraf
Kanazawa, 920-1192, \endgraf
Japan
}
\email{y-kwkami@se.kanazawa-u.ac.jp}


\maketitle

\begin{abstract}
We survey Bernstein-type theorems for graphical surfaces in the Euclidean space and the Lorentz-Minkowski space. 
More specifically, we explain several proofs of the Bernstein theorem for minimal graphs in the Euclidean $3$-space. Furthermore, 
we show the Heinz-type mean curvature estimates for graphs in the Euclidean $3$-space and space-like graphs in the Lorentz-Minkowski $3$-space. 
As an application of these estimates, we give Bernstein-type theorems for constant mean curvature graphs in the Euclidean $3$-space 
and constant mean curvature space-like graphs in the Lorentz-Minkowski $3$-space, respectively. We also study Bernstein-type results for 
minimal graphs in the Euclidean $4$-space and the Calabi-Bernstein theorem in the Lorentz-Minkowski $3$-space. 
\end{abstract}

\maketitle

\section{Introduction}
The study of the Bernstein theorem and its generalizations provide an important thrust in the evolution of geometric analysis. 
The classical Bernstein theorem \cite{Be1915, Be1927} in minimal surface theory states that any entire minimal graph in the Euclidean $3$-space $\R^3$ 
must be a plane. In other words, if $\Phi\colon \R^2 \to \R$ is an entire solution of the partial differential equation
\begin{equation}\label{intro-1}
\mathrm{div} \biggl{(} \dfrac{\nabla \Phi }{\sqrt{1+ |\nabla \Phi|^2}} \biggr{)} = 0, 
\end{equation}
then $\Phi$ is a linear function. Here $\nabla \Phi$ means the gradient of $\Phi$. 
This theorem gave rise to the Bernstein conjecture. The conjecture says that any entire minimal graphical hypersurface in the Euclidean $(n+1)$-space $\R^{n+1}$ must be 
a plane, that is, the only entire solution $\Phi\colon \R^n \to \R$ of \eqref{intro-1} is a linear function. The conjecture is affirmative for $n\leq 7$. Its proof is the result of the successive efforts 
by Fleming \cite{Fl1962}, Giorgi \cite{Gi1965}, Almgren \cite{Al1966} and Simons \cite{Si1968}. 
However, the conjecture does not hold for $n\geq 8$, as was shown by Bombieri, Giorgi, Giusti \cite{BGG1969}. See \cite{Os1984} for the history of the Bernstein conjecture.  

Bernstein-type theorems are known other than minimal graphical hypersurfaces in $\R^{n+1}$ for $n\leq 7$.  
For instance, Calabi \cite{Ca1970} (for $n\leq 4$) and Cheng-Yau \cite{CY1976} (for all $n$) showed that any entire maximal space-like graphical hypersurface in the Lorentz-Minkowski 
$(n+1)$-space $\L^{n+1}$ must be a plane. In other words, if $\Psi \colon \R^n \to \R$ is an entire solution of the partial differential equations
\begin{equation}\label{intro-2}
\quad |\nabla \Psi|< 1 \quad \text{and} \quad  \mathrm{div} \biggl{(} \dfrac{\nabla \Psi }{\sqrt{1- |\nabla \Psi|^2}} \biggr{)} = 0,  
\end{equation}
then $\Phi$ is a linear function. This result is called the {\it Calabi-Bernstein theorem}. There exist many other Bernstein-type theorems nowadays.  

The purpose of this paper is to give a survey on Bernstein-type theorems of graphical surfaces (i.e., the case of the dimension of the domain is $2$) in the Euclidean $3$-space and $4$-space ({\S}\ref{chap-E}) and the Lorentz-Minkowski $3$-space ({\S}\ref{chap-L}). The paper is organized as follows: In Section \ref{secE-1}, we explain some proofs of the Bernstein theorem for minimal graphs in $\R^3$. 
In Section \ref{secE-2}, we give the Heinz mean curvature estimate (Theorem \ref{CMC-Heinz}) for graphs in $\R^3$ and the Bernstein-type theorem (Corollary \ref{CMC-Bern}) for constant mean curvature 
graphs in $\R^3$ in reference to \cite[Section 1 in Chapter 2]{Ke2003}. In Section \ref{secE-3}, we state the result (Theorem \ref{R4_Bern}) on a geometric condition where 
an entire minimal graph in $\R^4$ is a complex analytic curve and prove some Bernstein results (Theorems \ref{R4_HHV2009}, \ref{R4_Schoen} and \ref{R4_SL}) by using this result in reference to \cite{HHV2011}. In Section \ref{secL-1}, we explain some duality (Lemma \ref{max-dual}) between minimal graphs in $\R^3$ and maximal space-like graphs in $\L^3$  
and prove the Calabi-Bernstein theorem 
(Theorem \ref{CB-thm1}) by using this duality. In Section \ref{secL-2}, we prove a Heinz-type mean curvature estimate (Theorem \ref{HKKT-thm1}) under an assumption on the gradient bound for space-like graphs in $\L^3$ and show a Bernstein-type theorem (Corollary \ref{Ber-cor6}) for constant mean curvature space-like graphs in $\L^3$ in reference to \cite{HKKT}. 

Finally, the author gratefully acknowledges the useful comments from Jorge Hidalgo and the reviewers.   

\section{Euclidean space}\label{chap-E}

\subsection{Bernstein theorem for minimal graphs in $\R^3$}\label{secE-1}

Let $X\colon \Omega \to \R^n$ be a $C^2$-immersion defined on a 
domain $\Omega \subset \R^2$ into the Euclidean $n$-space $\R^n\, (n\geq 3)$. A surface $X(\Omega) \subset \R^n$ 
is a {\it minimal surface} if its mean curvature $H$ vanishes at every point of $\Omega$. We consider 
in this chapter surfaces in non-parametric form. If the surface defined by 
\begin{equation}\label{mini-nonpara}
X(x, y) = (x, y, f_{3}(x, y), \ldots, f_{n}(x, y)) \in \R^n, \quad f_{k}(x, y) \in C^2 (\Omega, \R) 
\end{equation}
is minimal, we call it a {\it minimal graph} in $\R^n$. 
Then we have the following equations: For $k=3, \ldots, n$,   
\footnotesize
\begin{equation}\label{mini-eq}
\displaystyle \Biggl{(} 1+\sum_{r=3}^{n} \biggl{(} \dfrac{\partial f_{r}}{\partial y} \biggr{)}^{2}\Biggr{)}
\dfrac{\partial^2 f_k}{\partial x^2} -2 \sum_{r=3}^{n} \biggl{(} \dfrac{\partial f_r}{\partial x} \dfrac{\partial f_r}{\partial y} \biggr{)}\dfrac{\partial^2 f_k}{\partial x \partial y}+
\Biggl{(} 1+\sum_{r=3}^{n} \biggl{(} \dfrac{\partial f_{r}}{\partial x} \biggr{)}^{2}\Biggr{)}
\dfrac{\partial^2 f_k}{\partial y^2 } = 0. 
\end{equation}
\normalsize
We call \eqref{mini-eq} the {\it system of minimal surface equations}. For the case $n=3$, 
if we rewrite $f_{3}(x, y)$ as $\Phi (x, y)$, then \eqref{mini-eq} is the following second order nonlinear 
elliptic partial differential equation: 
\begin{equation}\label{mini-eq3}
(1+{\Phi}^2_{y}) \Phi_{xx} -2 \Phi_{x} \Phi_{y} \Phi_{xy} + (1+\Phi^{2}_{x}) \Phi_{yy} =0. 
\end{equation}
By using the gradient $\nabla \Phi := (\Phi_{x}, \Phi_{y})$ of $\Phi$, 
\eqref{mini-eq3} is represented as 
\begin{equation}\label{mini-div}
\mathrm{div} \biggl{(} \dfrac{\nabla \Phi }{\sqrt{1+ |\nabla \Phi|^2}} \biggr{)} =
\dfrac{\partial}{\partial x} \biggl{(} \dfrac{\Phi_x}{W} \biggr{)} +
\dfrac{\partial}{\partial y} \biggl{(} \dfrac{\Phi_y}{W} \biggr{)} = 0,  
\end{equation}
where $W:= \sqrt{1+ |\nabla \Phi|^2}= \sqrt{1+ \Phi^2_{x} + \Phi^2_{y}}$. 
The graph of $\Phi$ is given by 
\[
\Gamma_{\Phi} := \{ (x, y, \Phi (x, y)) \in \R^3 \, |\, (x, y) \in \Omega \}. 
\]  
If $\Omega =\R^2$, then $\Gamma_{\Phi}$ is said to be {\it entire}.   
Here we give three examples of minimal graphs in $\R^3$. 
\begin{enumerate}
\item[(\hspace{.28em}i\hspace{.28em})] The plane: $\Omega =\R^2$, $\Phi (x, y) =ax+by+c\, \,(a, b, c\in \R)$.  
\item[(\hspace{.18em}ii\hspace{.18em})] The helicoid: $\Omega =\R^2 \setminus \{ (x, y) \, | \,  x=0 \}$, $\Phi (x, y) =\tan^{-1}(y/x)$. 
\item[(\hspace{.08em}iii\hspace{.08em})] The upper half of catenoid: $\Omega =\R^2 \setminus \{ (x, y)\, |\, x^2+y^{2}< 1\}$, 
$\Phi (x, y)= \mathrm{cosh}^{-1}\sqrt{x^2 +y^2}$. 
\end{enumerate}

In 1915, Bernstein \cite{Be1915, Be1927} proved the following theorem, which is known as the 
Bernstein theorem.  

\begin{theorem}[Bernstein theorem]\label{min-Bernstein-thm}
The only solution of the minimal surface equation \eqref{mini-eq3} on the whole $(x, y)$-plane $\R^2$ is the trivial solution, that, is, $\Phi$ is a linear function of $x$ and $y$.  
\end{theorem}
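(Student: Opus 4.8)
The plan is to reduce the statement to Liouville's theorem for bounded entire holomorphic functions by means of the Gauss map of the graph. First I would regard the entire graph $\Gamma_{\Phi}$ as a complete, simply connected minimal surface and introduce global isothermal coordinates $w = u + iv$ in which the immersion is conformal; in such coordinates the three coordinate functions $X^1, X^2, X^3$ are harmonic, so the complex derivatives $\phi_k = \partial X^k / \partial w$ are holomorphic and satisfy $\sum_{k=1}^3 \phi_k^2 = 0$. The Gauss map $N \colon \Gamma_{\Phi} \to S^2$ then corresponds, through stereographic projection, to a meromorphic function $g$ of $w$.

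Two facts drive the argument. The first is geometric and easy: because $\Gamma_{\Phi}$ is a graph, its upward unit normal $N = (-\Phi_x, -\Phi_y, 1)/W$ has third component $1/W > 0$ everywhere, so the image of $N$ is contained in the open upper hemisphere of $S^2$. Choosing the stereographic projection from the south pole, the open upper hemisphere maps onto the unit disk, so $N$ never hits the projection point and $g$ is in fact a holomorphic function with $|g| < 1$. The second fact is the analytic heart of the proof: the conformal type of the parameter domain must be identified as the whole plane $\C$ rather than the unit disk $\D$. Since the induced graph metric (with coefficients $E = 1 + \Phi_x^2$, $F = \Phi_x \Phi_y$, $G = 1 + \Phi_y^2$) dominates the flat metric of the base $\R^2$, every divergent curve has infinite length and $\Gamma_{\Phi}$ is complete; by uniformization the simply connected surface is conformally either $\C$ or $\D$, and one must rule out $\D$.

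Granting that the parameter domain is $\C$, the function $g$ is a bounded holomorphic function on all of $\C$, so Liouville's theorem forces $g$ to be constant. A constant Gauss map means $\Gamma_{\Phi}$ is a plane, which is precisely the assertion that $\Phi$ is linear, completing the proof. The main obstacle is the conformal-type step: proving that a complete entire minimal graph is parabolic (conformally $\C$) rather than hyperbolic. I expect to handle this with the lemma, essentially due to Osserman, that a complete minimal graph is conformally $\C$; its proof exploits the harmonic horizontal coordinate functions $x(u,v), y(u,v)$ together with completeness of the graph metric, and this is the step requiring genuine care. An alternative route I could take is to bypass the Gauss map altogether and reduce the minimal surface equation \eqref{mini-eq3}, via a Lewy-type change of independent variables built from the closed forms underlying the divergence form \eqref{mini-div}, to holomorphic data to which Liouville applies directly; but in either approach the passage from \emph{entire graph} to \emph{parabolic conformal type} is where the real content lies.
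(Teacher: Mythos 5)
Your proposal is correct, and it is genuinely different from the proofs the paper carries out in detail: what you describe is precisely the fourth route the paper only sketches --- Nirenberg's suggestion realized by Osserman's theorem (Theorem \ref{min-gauss}), for which the paper simply cites \cite[Theorem 8.1]{Os1986} and remarks that an entire minimal graph is complete with Gauss map contained in a hemisphere. The paper's two written-out proofs instead stay on the PDE side: Bernstein's original argument applies the Liouville-type theorem for elliptic equations (Theorem \ref{min-Liouville}) to the bounded functions $\tan^{-1}(\Phi_x)$ and $\tan^{-1}(\Phi_y)$, while Nitsche's argument constructs from the closed forms underlying \eqref{mini-div} a potential $\zeta$ with $\zeta_{xx}\zeta_{yy}-\zeta_{xy}^2=1$ and invokes the J\"orgen theorem (Theorem \ref{min-Jor}); note that the ``Lewy-type'' alternative you mention at the end is essentially this second proof. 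What your route buys is the stronger geometric statement standing behind it --- any complete minimal surface whose Gauss image is not dense in $S^2$ is a plane, of which the Bernstein theorem is a one-line corollary; what the PDE routes buy is elementarity, since they avoid uniformization and any conformal-type analysis. One caution on your crux step: the lemma to quote is not that completeness of a minimal graph by itself forces parabolicity, but Osserman's lemma that a complete, simply connected minimal surface whose Weierstrass function $g$ is bounded cannot be conformally the disk $\D$; the bound on $g$ enters essentially, because completeness gives $\int_\gamma \tfrac{1}{2}|f|(1+|g|^2)\,|dw|=\infty$ along every divergent path $\gamma$, and this is contradicted by Osserman's construction of a divergent path in $\D$ with $\int_\gamma |f|\,|dw|<\infty$ --- a contradiction only available when $1+|g|^2$ is bounded. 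Since your hemisphere observation gives $|g|<1$, that lemma applies verbatim and your argument closes.
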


The Bernstein theorem can be interpreted geometrically as stating that any entire minimal graph in $\R^{3}$ must be a plane.  

Many proofs of the Bernstein theorem are known (see \cite[Pages 123, 124]{Ni1989}).   
We present here some of them. The first is the Bernstein original proof. 
We give an overview of the proof in reference to \cite{Be1915, Be1927}, \cite{Fa2007} and \cite{Os1984}.
The Bernstein theorem can be seen as a Liouville-type theorem. 
Indeed, Bernstein \cite{Be1915, Be1927} obtained Theorem \ref{min-Bernstein-thm} from the following Liouville-type theorem for solutions of elliptic, not necessarily uniformly elliptic, equations on $\R^2$. 

\begin{theorem}\label{min-Liouville}
Let $a, b, c\colon \R^2 \to \R$ be functions such that the symmetric matrix 
\[
\left(
    \begin{array}{cc}
      a(x, y) & b(x, y) \\
      b(x, y) & c(x, y) 
    \end{array}
  \right)
\]
is positive definite for each $(x, y)\in \R^2$. Let $f\in C^{2} (\R^2, \R)$ be a solution of 
\begin{eqnarray}\label{min-elliptic}
  \left\{
    \begin{array}{l}
      a(x, y) f_{xx} + 2b(x, y) f_{xy} + c(x, y) f_{yy} = 0 \,\, \rm{on} \,\, \mathbf{R}^2, \\
      f(x, y) = o(\sqrt{x^2 +y^2}) \,\, {\rm{as}} \,\, \sqrt{x^2 +y^2}\to +\infty. 
    \end{array}
  \right.
\end{eqnarray}
Then $f$ is constant on $\R^2$.  
\end{theorem}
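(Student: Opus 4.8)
The plan is to use the two–dimensionality of \eqref{min-elliptic} decisively and reduce the statement to the classical Liouville theorem for harmonic functions, for which the sublinear growth hypothesis is precisely what is needed. First I would record the consequences of positive–definiteness: pointwise $a>0$, $c>0$ and $ac-b^2>0$, so that \eqref{min-elliptic} is genuinely elliptic with no zeroth–order term. In particular affine functions are solutions, so the hypotheses are sharp, and the weak maximum principle holds on every disk; hence $\max_{\overline{B_R}}f$ is attained on $\partial B_R$, and the growth condition $f=o(\sqrt{x^2+y^2})$ controls these boundary values. I would stress, however, that since the equation need not be uniformly elliptic, no Harnack or oscillation–decay estimate is available, so the maximum principle alone can only yield that the boundary maxima are monotone and sublinear in $R$—not constant. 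A purely comparison–based argument therefore cannot close the proof, which is what forces the more structural approach below.

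The core step is a change of independent variables. Writing the principal part of \eqref{min-elliptic} through its Beltrami coefficient $\mu$ (determined by $a,b,c$, with $|\mu|<1$), I would solve the associated Beltrami–type system to produce a homeomorphism $w=\xi+i\eta$ of the plane in whose coordinates $f$ satisfies Laplace's equation. Globally, the $C^2$–regularity of $f$ on all of $\R^2$ together with a properness argument is used to guarantee that $w$ maps $\R^2$ onto the entire $w$–plane, so that $f$ is realized as a harmonic function $h$ on $\C$. The conclusion then follows from the interior gradient (Cauchy) estimate $|\nabla h(z_0)|\le (C/R)\sup_{\partial B_R(z_0)}|h|$: sublinear growth of $h$ drives the right–hand side to $0$ as $R\to\infty$, whence $\nabla h\equiv 0$ and $f$ is constant. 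I would also note that the global $C^2$–regularity is essential, since dropping it allows sublinear nonconstant competitors (such as radial profiles $r^\alpha$ with $0<\alpha<1$) that fail to be $C^2$ only at a single point.

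The hard part is the transfer of the growth condition through this change of variables. Because \eqref{min-elliptic} is only pointwise elliptic, the map $w$ need not be a quasi–isometry and may distort distances severely near infinity, so that $f=o(\sqrt{x^2+y^2})$ in the original coordinates does not automatically give $h=o(|w|)$ in the new ones. Showing that the coordinate change does not contract distances too quickly at infinity—so that sublinear growth survives the transport—is exactly where the full strength of the hypotheses must be used: the global $C^2$–regularity, which forces the Hessian of $f$ to be everywhere indefinite or degenerate, together with the precise structure of $a,b,c$. This quantitative control of the map at infinity is the step I expect to require the most care.
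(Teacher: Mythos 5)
Your proposal has a genuine gap at its central step, and it is exactly the point this theorem is designed to handle. Producing a global homeomorphism $w=\xi+\mathrm{i}\eta$ of $\R^2$ onto $\C$ in whose coordinates $f$ becomes harmonic requires solving a Beltrami equation, and the measurable Riemann mapping theorem demands $\|\mu\|_{\infty}\le k<1$, i.e.\ \emph{uniform} ellipticity. Here the coefficient matrix is only assumed positive definite pointwise (the functions $a,b,c$ are not even assumed continuous or bounded), and the paper stresses ``not necessarily uniformly elliptic'' deliberately: in the application to the Bernstein theorem one takes $a=1+\Phi_y^2$, $b=-\Phi_x\Phi_y$, $c=1+\Phi_x^2$, whose ellipticity ratio equals $1+|\nabla\Phi|^2$ and is unbounded precisely in the case that matters. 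So the existence theorem you invoke is unavailable in the required generality. Compounding this, the step you explicitly defer --- transporting $f=o(\sqrt{x^2+y^2})$ through the coordinate change --- is not a postponable technicality but the crux of any such argument: for a degenerate coordinate change there is no control of the distortion at infinity, and you offer no mechanism to obtain one. It is worth recalling that the paper points out that Bernstein's own argument for the key lemma contained a gap, repaired only by Hopf and Mickle; unproved quantitative claims of exactly this delicacy are where proofs in this area break down. As it stands, your proof rests on an existence theorem that does not apply plus an estimate you acknowledge you cannot yet prove.

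For contrast, the paper's proof never changes coordinates and never analyzes the equation near infinity. It observes that positive definiteness of the coefficient matrix together with $af_{xx}+2bf_{xy}+cf_{yy}=0$ forces $f_{xx}f_{yy}-f_{xy}^2\le 0$ everywhere, with equality at a point only if $f_{xx}=f_{xy}=f_{yy}=0$ there --- a fact you essentially noticed (``indefinite or degenerate Hessian'') but used only as a side remark. All of the analytic difficulty is then delegated to the Hopf--Mickle result (Lemma \ref{min-HM1950}): a $C^2$ function with $f_{xx}f_{yy}-f_{xy}^2\le 0$ everywhere and $<0$ at some point cannot be $o(\sqrt{x^2+y^2})$. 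Sublinear growth therefore forces the Hessian to vanish identically, so $f$ is affine, and a sublinear affine function is constant. If you want to salvage your route, you must either add uniform ellipticity as a hypothesis (which would make the theorem too weak to yield Bernstein's theorem) or prove a growth/distortion estimate for the degenerate coordinate change, which is essentially of the same depth as the Hopf--Mickle lemma itself.
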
 

If $a\equiv1, b\equiv 0, c\equiv 1$ and $f$ is bounded on $\R^2$, then Theorem \ref{min-Liouville} corresponds to the Liouville theorem of harmonic functions. 
By using Theorem \ref{min-Liouville}, we can prove the Bernstein theorem as follows: 

\begin{proof}[Proof of Theorem \ref{min-Bernstein-thm}]
As it is well-known, any solution of \eqref{mini-eq3} is real analytic. Then a direct calculation shows that 
\[
\varphi_{1} = \tan^{-1}{(\Phi_{x})}, \quad \varphi_{2} = \tan^{-1}{(\Phi_{y})}
\]
are bounded and satisfy  
\[
(1+(\Phi_{y})^2)(\varphi_{i})_{xx} -2\Phi_{x}\Phi_{y}{(\varphi_{i})}_{xy} + (1+(\Phi_{x})^2)(\varphi_{i})_{yy} = 0 \quad (i=1, 2). 
\]
Set $a(x, y) = 1+(\Phi_{y})^2, b(x, y)=  -\Phi_{x}\Phi_{y}, c(x, y)= 1+(\Phi_{x})^2$. Then, by applying  
Theorem \ref{min-Liouville}, we obtain that $\nabla \Phi =(\Phi_{x}, \Phi_{y})$ is constant, that is, $\Phi$ is linear. 
\end{proof}

We give a sketch of the proof of Theorem \ref{min-Liouville}. From the assumption, we have 
\[
f_{xx} f_{yy} -f^2_{xy} \leq 0 
\]   
everywhere on $\R^2$ and the equality holds only at points where $f_{xx}=f_{yy}=f_{xy}=0$ since 
the equation is elliptic. To conclude Theorem \ref{min-Liouville}, Bernstein proved the following lemma. 
However, there exists a gap in the proof, and a complete proof was later given by Hopf \cite{Ho1950} and Mickle \cite{Mi1950}. 

\begin{lemma}\label{min-HM1950}
If $f\in C^2 (\R^2, \R)$ satisfies $f_{xx}f_{yy}-f^{2}_{xy}\leq 0$ (i.e. the Gaussian curvature $K$ of $\Gamma_{f}$ satisfies $K\leq 0$) on $\R^2$ and $f_{xx}f_{yy}-f^{2}_{xy}< 0$ (i.e. $K<0$) 
at some point, then $f(x, y)$ cannot be $o(\sqrt{x^2 +y^2})$ as $\sqrt{x^2 +y^2} \to +\infty$. 
\end{lemma}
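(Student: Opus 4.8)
The plan is to argue by contraposition: assuming $f\in C^2(\R^2,\R)$ satisfies $f_{xx}f_{yy}-f_{xy}^2\le 0$ everywhere together with $f=o(\sqrt{x^2+y^2})$, I will show that $f_{xx}f_{yy}-f_{xy}^2\equiv 0$, so that the strict inequality $K<0$ cannot occur at any point. The quantity $f_{xx}f_{yy}-f_{xy}^2$ is exactly the Jacobian of the gradient map $N=\nabla f\colon\R^2\to\R^2$, $N(x,y)=(f_x,f_y)$, so the hypothesis says that $N$ is everywhere orientation-reversing or degenerate; the whole argument is a quantitative form of the statement that such a map cannot be ``too small at infinity'' while genuinely folding on an open set.

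First I would record the divergence identity
\[
2\,(f_{xx}f_{yy}-f_{xy}^2)=\frac{\partial}{\partial x}\bigl(f_x f_{yy}-f_y f_{xy}\bigr)+\frac{\partial}{\partial y}\bigl(f_y f_{xx}-f_x f_{xy}\bigr),
\]
valid for $f\in C^3$ and extended to the $C^2$ case by mollification. Writing $V=(f_x f_{yy}-f_y f_{xy},\,f_y f_{xx}-f_x f_{xy})$ and integrating over the disc $B_R$, the divergence theorem shows that $\int_{B_R}(f_{xx}f_{yy}-f_{xy}^2)\,dA$ equals, up to the factor $\tfrac12$, the flux $\oint_{\partial B_R}V\cdot n\,ds$. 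A short computation identifies this flux with the signed area enclosed by the image curve $N(\partial B_R)$ under the gradient map; equivalently, it is the degree-weighted area that $N$ sweeps over $B_R$. Since the integrand is $\le 0$ on all of $\R^2$ and is $<0$ on a whole neighbourhood of the point $p_0$ where $K<0$, this integral is negative and nonincreasing in $R$, so there is $\delta>0$ with
\[
\int_{B_R}\bigl(f_{xx}f_{yy}-f_{xy}^2\bigr)\,dA\le-\delta<0\qquad\text{for all large }R.
\]
Thus the gradient image curves $N(\partial B_R)$ enclose a definite amount of signed area; in topological terms, $N$ has negative winding number around a set of measure $\ge\delta$ for every large $R$.

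The heart of the matter, and the step I expect to be the main obstacle, is to contradict this using only $f=o(r)$. I would try to produce a sequence of radii $R_k\to\infty$ along which the enclosed area of $N(\partial B_{R_k})$ tends to $0$, contradicting the bound $-\delta$. The natural route is to integrate $V\cdot n$ by parts along the circle so as to trade the second derivatives of $f$ appearing in $V$ against tangential derivatives, and then to select good radii by an averaging (Courant--Lebesgue-type) argument that exploits the sublinear growth of $f$. The genuine difficulty is precisely that $f_{xx}f_{yy}-f_{xy}^2$ and $V$ are second order, whereas $f=o(r)$ is a zeroth-order bound and $f$ is only assumed $C^2$ with no elliptic equation available to upgrade the control on $f$ to control on its derivatives. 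Bridging this gap --- showing that a sublinearly growing $C^2$ function cannot make its gradient map fold over a fixed positive area near infinity --- is exactly the point at which Bernstein's original reasoning was incomplete and which Hopf and Mickle repaired; any careful proof must supply this estimate rather than treat it as routine.

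Finally, it is worth noting the equivalent geometric reading that one may use as a consistency check: for the graph $\Gamma_f$ the above integral is, after weighting by $W^{-3}$, the total curvature $\int_{\Gamma_f}K\,dA\le 0$, while the gradient (Gauss) image lies in a bounded region of the plane (respectively the upper hemisphere). The argument then amounts to showing that a surface with $K\le 0$, strictly somewhere, and with bounded Gauss image cannot be the graph of a sublinear function. Either formulation isolates the same growth-versus-curvature estimate as the crux.
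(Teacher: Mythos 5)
Your proposal is not a complete proof: it sets up a correct framework and then stops at precisely the step that constitutes the entire content of the lemma --- and you say so yourself. The divergence identity, the interpretation of $\int_{B_R}(f_{xx}f_{yy}-f_{xy}^2)\,dA$ as the signed area swept by the gradient map $N=\nabla f$, and the observation that this integral is $\le-\delta<0$ for all large $R$ are all fine. But the contradiction must come from showing that $f=o(\sqrt{x^2+y^2})$ forces this signed area to be close to zero along some sequence of radii, and no mechanism for this is given. The obstruction is not a routine technicality that a Courant--Lebesgue or averaging argument will dissolve: a zeroth-order bound on $f$ gives no control whatsoever on $\nabla f$, even on average over circles --- a function that is uniformly small can have arbitrarily large gradient, and its gradient-image curves can enclose arbitrarily large amounts of area. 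What rules such behavior out here is the global hypothesis $f_{xx}f_{yy}-f_{xy}^2\le 0$, which your outline uses exactly once (for the sign of the bulk integral) and never again; any correct proof must re-inject this hypothesis into the boundary/growth estimate, and doing so is exactly where Bernstein's original argument broke down and what the papers of Hopf and Mickle supply (Hopf by a more geometric argument about the structure of saddle functions, Mickle by a quantitative repair of Bernstein's integral scheme). As written, the proposal proves nothing beyond the standard reduction.

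For comparison: the paper does not prove this lemma at all. It states it as the key ingredient in Bernstein's proof of Theorem \ref{min-Liouville}, records explicitly that Bernstein's own proof of it contained a gap, and cites Hopf \cite{Ho1950} and Mickle \cite{Mi1950} for complete proofs. So your identification of where the difficulty sits is historically and mathematically accurate, but the missing estimate is not a detail to be deferred --- it is the theorem.
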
 

By applying Lemma \ref{min-HM1950}, $f_{xx}f_{yy}-f^2_{xy} = 0$ on $\R^2$, that is, 
$f_{xx}=f_{yy}=f_{xy}=0$ on $\R^2$. Thus $f$ is linear. Moreover, by the second equation of \eqref{min-elliptic}, 
we can show that $f$ is constant.  

The second is a proof by Nitsche. 
Nitsche showed a simple proof of the J\"orgen theorem \cite{Jo1954} in his paper \cite{Ni1957}.  

\begin{theorem}[J\"orgen theorem]\label{min-Jor}
Let $f \in C^2 (\R^2, \R)$ be a solution of the Monge-Amp\`ere equation 
\begin{equation}\label{min-MA}
f_{xx}f_{yy}-f^2_{xy} = 1. 
\end{equation}
Then $f(x, y)$ is a quadratic polynomial function of $x, y$.  
\end{theorem}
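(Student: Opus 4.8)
The plan is to follow the classical complex-analytic argument going back to J\"orgens, in the streamlined form due to Nitsche: transform the problem by a gradient map into a statement about an entire holomorphic function with bounded derivative, and then invoke Liouville's theorem.

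First I would normalize the sign of $f$. Since $f_{xx}f_{yy}-f^2_{xy}=1>0$, the Hessian of $f$ is definite at every point; by continuity of its trace and connectedness of $\R^2$ it is either positive definite everywhere or negative definite everywhere, and as $-f$ also solves \eqref{min-MA}, I may assume $f$ is strictly convex, so that the eigenvalues $\lambda_1,\lambda_2>0$ of $\mathrm{Hess}\,f$ satisfy $\lambda_1\lambda_2=1$. I would then introduce the \emph{Lewy transformation} $\Phi(x,y)=(\xi,\eta):=(x+f_x,\,y+f_y)$, which is the gradient map of $g:=\tfrac12(x^2+y^2)+f$. Because $\mathrm{Hess}\,g=I+\mathrm{Hess}\,f$ has eigenvalues $1+\lambda_i\geq 1$, the function $g$ is strongly convex, so $|\Phi(p)-\Phi(q)|\geq|p-q|$; this makes $\Phi$ a proper local diffeomorphism of $\R^2$, hence (being a proper local diffeomorphism onto the simply connected plane, equivalently by the Hadamard global inverse function theorem) a diffeomorphism of $\R^2$ onto $\R^2$.

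Establishing this global bijectivity is the step I expect to be the main obstacle: everything after it is a local computation, but the eventual appeal to Liouville's theorem needs the new complex coordinate to sweep out all of $\C$, and for that the image of $\Phi$ must genuinely be the whole plane rather than merely an open convex subset, which is where the Monge--Amp\`ere normalization (forcing $\mathrm{Hess}\,g\succeq I$) is essential.

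Next, writing $z=x+iy$ and $\zeta=\xi+i\eta$, I would consider the function $F:=(x-f_x)-i(y-f_y)=\bar{z}-2f_z$, regarded through $\Phi^{-1}$ as a function of $\zeta$ on all of $\C$. A direct computation with the Wirtinger derivatives $f_z,f_{\bar z},f_{zz},f_{z\bar z},f_{\bar z\bar z}$ — in which the Monge--Amp\`ere equation \eqref{min-MA} enters precisely in the form $f_{z\bar z}^2-f_{zz}f_{\bar z\bar z}=\tfrac14$ — shows that $\partial F/\partial\bar\zeta=0$, so $F$ is entire. Differentiating then gives $F'(\zeta)=-4f_{zz}/(2+4f_{z\bar z})$, and the same relation yields $|F'(\zeta)|=2\sqrt{f_{z\bar z}^2-\tfrac14}\,/(1+2f_{z\bar z})<1$ at every point, since $f_{z\bar z}=\tfrac14\Delta f>0$ by convexity.

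Finally, $F'$ is a bounded entire function, so by Liouville's theorem it is a constant $c$ with $|c|<1$. I would close by reading off the second derivatives of $f$ from this: if $c=0$ then $f_{zz}=0$ and $f_{z\bar z}=\tfrac12$, while if $c\neq 0$ the identities $F'\equiv c$ and $|f_{zz}|^2=f_{z\bar z}^2-\tfrac14$ combine to determine $f_{z\bar z}$ as the constant root of a linear relation, and hence $f_{zz}$ as a constant as well. In either case $f_{zz}$, $f_{\bar z\bar z}$ and $f_{z\bar z}$ are all constant, so $f_{xx}$, $f_{xy}$, $f_{yy}$ are constant and $f$ is a quadratic polynomial in $x$ and $y$, as claimed.
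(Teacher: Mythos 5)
Your proposal is correct: the normalization to a convex solution, the expansion property $|\Phi(p)-\Phi(q)|\geq |p-q|$ making the Lewy map $(x,y)\mapsto (x+f_x,\,y+f_y)$ a global diffeomorphism of $\R^2$, the holomorphy of $F=\bar z-2f_z$ in $\zeta$ (with \eqref{min-MA} entering as $f_{z\bar z}^2-f_{zz}f_{\bar z\bar z}=\tfrac14$), the bound $|F'|=2\sqrt{f_{z\bar z}^2-\tfrac14}/(1+2f_{z\bar z})<1$, and the Liouville step all check out and do force $f_{xx}$, $f_{xy}$, $f_{yy}$ to be constant. Note that the paper itself gives no proof of Theorem \ref{min-Jor} --- it states the result and defers to Nitsche \cite{Ni1957} --- and your argument is precisely that classical J\"orgens--Nitsche proof, so it coincides with the approach the paper relies on by citation.
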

See \cite{Ni1957} for a detailed proof. 
We show the proof which derives the Bernstein theorem (Theorem \ref{min-Bernstein-thm}) from the J\"orgen theorem (Theorem \ref{min-Jor}).  
The proof comes from the Heinz observation (see \cite[page 133]{Jo1954}, \cite[Lemma 4.4]{Os1986} and \cite[Section 3 in Chapter 5]{Ko2021}).  

\begin{proof}[Proof of Theorem \ref{min-Bernstein-thm}]
If the function $\Phi$ satisfies \eqref{mini-eq3}, then 
\[
\omega_{1} := \dfrac{1+\Phi^{2}_{x}}{W}\,dx + \dfrac{\Phi_{x}\Phi_{y}}{W}\,dy, \quad 
\omega_{2} := \dfrac{\Phi_{x}\Phi_{y}}{W}\,dx + \dfrac{1+\Phi^{2}_{y}}{W}\,dy 
\]
are closed. By the Poincar\'e lemma, there exist $\xi (x, y), \eta (x, y) \in C^{2} (\R^2, \R)$ 
such that $d\xi =\omega_{1}, d\eta =\omega_{2}$. Moreover, $\omega_{3}:= \xi\,dx+\eta\,dy$ is also closed. By the Poincar\'e lemma, there exists $\zeta (x, y)\in C^{3} (\R^2, \R)$ such that $d\zeta =\omega_{3}$. Then we have 
\[
\zeta_{xx}= \dfrac{1+\Phi^{2}_{x}}{W}, \quad \zeta_{xy}= \dfrac{\Phi_{x} \Phi_{y}}{W}, \quad \zeta_{yy}= \dfrac{1+\Phi^{2}_{y}}{W}.  
\]
Thus $\zeta (x, y)$ satisfies \eqref{min-MA}. By Theorem \ref{min-Jor}, $\zeta_{xx}, \zeta_{xy}, \zeta_{yy}$ are constant, that is, $\Phi_{x}$ and $\Phi_{y}$ are also constant. Hence $\Phi$ is linear.  
\end{proof}

The third is a proof by Heinz. Heinz \cite{He1952} proved the following Gaussian curvature estimate for minimal graphs in $\R^3$.  

\begin{theorem}\label{min-Heinz}
Let $\Delta_{R}$ be the open disk with center at the origin and radius $R\,(>0)$ in $\R^2$, 
and $\Phi (x, y)$ a $C^2$-differentiable function on $\Delta_{R}$. Assume that $\Gamma_{\Phi}$ is a minimal graph on $\Delta_{R}$ in $\R^3$. 
There exists a positive constant $C$ such that the Gaussian curvature $K$ of $\Gamma_{\Phi}$ satisfies 
\begin{equation}\label{mini-curvature-estimate}
|K| \leq \dfrac{C}{R^2}.
\end{equation} 
\end{theorem}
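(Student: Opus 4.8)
The plan is to reduce to a fixed scale and then exploit the complex-analytic structure carried by a minimal surface through its (anti)holomorphic Gauss map; the estimate is understood at the point of $\Gamma_{\Phi}$ lying over the centre of $\Delta_{R}$. First I would record the scaling invariance of the minimal surface equation \eqref{mini-eq3}. If $\Phi$ is a minimal graph on $\Delta_{R}$, then $\tilde{\Phi}(\xi,\eta):=\tfrac{1}{R}\Phi(R\xi,R\eta)$ is a minimal graph on the unit disk $\Delta_{1}$, with $\nabla\tilde{\Phi}(\xi,\eta)=\nabla\Phi(R\xi,R\eta)$, and $\Gamma_{\tilde{\Phi}}$ is obtained from $\Gamma_{\Phi}$ by the homothety $X\mapsto X/R$. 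Since such a homothety multiplies the Gaussian curvature by $R^{2}$, the desired bound $|K|\le C/R^{2}$ is equivalent to the scale-free estimate $|\tilde{K}|\le C$ at the centre for an arbitrary minimal graph over $\Delta_{1}$. Hence it suffices to produce a universal constant $C$ bounding $|K|$ at the centre of any minimal graph over the unit disk.

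Next I would pass to an isothermal parametrization $X\colon\Omega\to\Gamma_{\Phi}$ over a simply connected parameter domain $\Omega\subset\C$, normalized so that the parameter $w=0$ corresponds to the point over the origin. In such a parametrization the coordinate functions are harmonic, the data are encoded by a holomorphic $1$-form $f\,dw$ together with a meromorphic Gauss map $g$ (the stereographic projection of the unit normal), and the induced metric is $ds^{2}=\bigl(\tfrac{1}{2}|f|(1+|g|^{2})\bigr)^{2}\,|dw|^{2}$. Here the graph hypothesis enters decisively: the unit normal $N=\tfrac{1}{W}(-\Phi_{x},-\Phi_{y},1)$ has everywhere positive third component, so, with a suitable choice of stereographic projection, $g$ maps $\Omega$ into the unit disk, i.e. $g$ is a bounded holomorphic function with $|g|<1$. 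Moreover the base projection $w\mapsto(x(w),y(w))$ is a diffeomorphism of $\Omega$ onto $\Delta_{1}$, once again because $\Gamma_{\Phi}$ is a graph, and a short computation gives the relation $|f|=\bigl|\partial_{w}(x-iy)\bigr|$.

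The heart of the matter is the Weierstrass curvature identity
\[
\sqrt{|K|}=\dfrac{4\,|g'|}{|f|\,(1+|g|^{2})^{2}},
\qquad\text{equivalently}\qquad
\sqrt{|K|}\,ds=\dfrac{2\,|g'|}{1+|g|^{2}}\,|dw|,
\]
whose right-hand side is exactly the pullback under $g$ of the round spherical metric, expressing that the Gauss map scales the surface metric by $\sqrt{|K|}$. To bound $\sqrt{|K|}$ at $w=0$ I would therefore estimate the derivative of the bounded holomorphic map $g$: since $g$ takes values in the unit disk, the Schwarz--Pick lemma bounds the spherical pullback density $\tfrac{2|g'|}{1+|g|^{2}}$ at $0$ by the hyperbolic density of $\Omega$ at $0$, while a Cauchy-type estimate bounds $|g'(0)|$ as soon as a parameter disk of definite radius about $0$ is available. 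Dividing by a lower bound for the conformal factor $\tfrac{1}{2}|f|(1+|g|^{2})$ at the centre then yields $\sqrt{|K(0)|}\le C$.

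The main obstacle is precisely this last geometric input. To convert the function-theoretic estimate for $g$ into a \emph{universal} constant, one must control the conformal parametrization relative to the fixed base disk---that is, guarantee that $\Omega$ contains a parameter disk of definite radius around $0$ and bound the conformal factor from below at the centre---so that the parametrization cannot degenerate even when $|\nabla\Phi|$, and hence $|g|$, is large. This is where the graph hypothesis must be used quantitatively, through the diffeomorphism $w\mapsto(x(w),y(w))$ onto $\Delta_{1}$, the relation $|f|=|\partial_{w}(x-iy)|$, and the area identity $\pi=\int_{\Omega}|f|^{2}(1-|g|^{4})\,du\,dv$ that ties the size of $\Omega$ to that of $f$. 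Carrying out this comparison carefully---the delicate point already present in Heinz's original argument---produces the constant $C$ and completes the proof.
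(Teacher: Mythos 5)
Your outline follows, in its broad strokes, exactly the route of the proof that the paper itself points to: the paper does not prove Theorem \ref{min-Heinz} but refers to \cite[Chapter 11]{Os1986}, and that proof likewise rescales to the unit disk, passes to isothermal parameters, uses the graph condition to get a Gauss map $g$ with $|g|<1$, and estimates the spherical derivative of $g$. The scaling reduction, the curvature identity $\sqrt{|K|}\,ds=\tfrac{2|g'|}{1+|g|^{2}}|dw|$, and the role of Schwarz--Pick are all correctly placed (two harmless normalization slips aside: with your metric convention one has $|\partial_{w}(x-iy)|=|f|/2$, and the area identity carries a factor $1/4$).

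However, there is a genuine gap, and it sits precisely where you park it: the step you describe as ``the main obstacle'' and then dispose of with ``carrying out this comparison carefully \dots produces the constant $C$'' is not a routine verification --- it \emph{is} Heinz's theorem. Concretely, if you uniformize the parameter domain $\Omega$ to the unit disk, Schwarz--Pick gives $|g'(0)|\leq 1-|g(0)|^{2}$ for free, and what remains is a \emph{pointwise} lower bound on the conformal factor $|f(0)|(1+|g(0)|^{2})$ at the centre. None of the ingredients you list delivers this: the area identity only controls $\int|f|^{2}(1-|g|^{4})$ on average, and it degenerates exactly in the dangerous regime $|g(0)|\to 1$ (i.e.\ $|\nabla\Phi|(0)$ large), so no Cauchy-type estimate can be extracted from it. What is missing is a named quantitative lemma: either Heinz's lemma \cite{He1952} that a harmonic diffeomorphism of the unit disk onto the unit disk fixing $0$ (here, the projection $w\mapsto(x,y)$, which is harmonic but \emph{not} holomorphic, since $\partial_{\bar w}(x-iy)=-\tfrac12\bar f\bar g^{2}\neq 0$) has Jacobian at $0$ bounded below by a universal constant; or Osserman's substitute, which builds the isothermal coordinates explicitly via $\zeta=(x+\xi)+i(y+\eta)$ with $d\xi=\omega_{1}$, $d\eta=\omega_{2}$ the closed forms appearing in the paper's Nitsche-style proof of Theorem \ref{min-Bernstein-thm}; this $\zeta$ is a distance-increasing diffeomorphism of $\Delta_{R}$ onto a domain containing a disk of radius $R$, in which the conformal factor is $\geq 1$, giving both the parameter disk and the lower bound simultaneously. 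With either lemma your argument closes (indeed the Jacobian bound $|f(0)|^{2}(1-|g(0)|^{4})\geq c$ combined with Schwarz--Pick yields $\sqrt{|K(0)|}\leq 4/\sqrt{c}$); without one, the proposal is a correct plan rather than a proof.
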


An accessible proof of this theorem can be found in \cite[Chapter 11]{Os1986}. As a corollary of Theorem \ref{min-Heinz}, the Bernstein theorem (Theorem \ref{min-Bernstein-thm}) 
can be proved. Indeed, we obtain $K\equiv 0$ by $R\to \infty$ in \eqref{mini-curvature-estimate}. 
Thus $\Gamma_{\Phi}$ must be a plane because $K\equiv H\equiv 0$. 

The fourth is a proof from value distribution property of the Gauss map suggested by Nirenberg. His idea was to think of the Bernstein theorem in more geometric terms. 
More specifically, the assumption that the surface is an entire graph in $\R^3$ was replaced by the assumption that the surface is complete and its Gauss map omits 
a neighborhood of some point in the $2$-sphere $S^2$. Following this idea, Osserman \cite[Theorem 8.1]{Os1986} proved the following theorem. 

\begin{theorem}\label{min-gauss}
Let $\Sigma$ be an oriented connected $2$-manifold and $X\colon \Sigma \to \R^3$ a complete minimal surface. If the image of its Gauss map is not dense in $S^2$, 
then $X(\Sigma)$ must be a plane. 
\end{theorem}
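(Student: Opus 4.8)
The plan is to translate the geometric hypothesis into complex function theory via the Weierstrass representation and then invoke Liouville's theorem. First I would endow $\Sigma$ with the conformal structure coming from isothermal parameters for the induced metric, so that $\Sigma$ becomes a Riemann surface on which the coordinate functions are harmonic and the Gauss map, composed with stereographic projection, is a meromorphic function $g$. Passing to the universal covering $\pi\colon \tilde{\Sigma}\to\Sigma$ and pulling back the immersion, the metric, and $g$, I may assume $\Sigma$ is simply connected (completeness lifts to the cover, and $g\circ\pi$ omits the same values). Since the Gauss image is not dense, it misses a spherical cap; after a rotation of $\R^3$, which acts on $g$ by a M\"obius transformation, I may assume this cap is centered at the pole sent to $\infty$ by stereographic projection, so that $g$ is a bounded holomorphic function, say $|g|\le R$. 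Having no poles, the Weierstrass one-form $\eta$ is then a nowhere-vanishing holomorphic $1$-form, and the induced metric is $ds=\tfrac12(1+|g|^2)\,|\eta|$.

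The heart of the argument is to show that this forces $\tilde{\Sigma}$ to be conformally $\C$. Because $g$ is bounded, the metric $ds$ is comparable to the flat metric $|\eta|$: one has $\tfrac12|\eta|\le ds\le \tfrac12(1+R^2)|\eta|$. On the simply connected surface the holomorphic function $\tau:=\int\eta$ is single-valued with $d\tau=\eta\ne 0$, so $\tau\colon\tilde{\Sigma}\to\C$ pulls the Euclidean metric of $\C$ back to $|\eta|^2$ and is a local isometry. The bi-Lipschitz comparison shows the flat metric $|\eta|$ is complete, since $ds$ is; hence $\tau$ is a local isometry out of a complete surface, and the standard covering criterion makes $\tau$ a Riemannian covering of $\C$. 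As $\C$ is simply connected, $\tau$ is a biholomorphism and $\tilde{\Sigma}\cong\C$. (In particular $\tilde{\Sigma}$ cannot be compact, since then $\tau$ would be a nonconstant holomorphic function with nonvanishing differential on a compact Riemann surface, which is impossible.)

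Finally, $g$ becomes a bounded entire function on $\C$, so by Liouville's theorem it is constant; thus the Gauss map $N$ is constant on $\tilde{\Sigma}$ and, by surjectivity of $\pi$, on $\Sigma$. A constant unit normal $N_0$ forces $d\langle X,N_0\rangle=0$, so $\langle X,N_0\rangle$ is constant and $X(\Sigma)$ lies in an affine plane; a connected minimal surface contained in a plane is an open piece of that plane. I expect the main obstacle to be the determination of the conformal type, namely proving $\tilde{\Sigma}\cong\C$ rather than the unit disk, which is exactly where completeness enters through the comparison of $ds$ with the flat metric $|\eta|$ and the covering-map argument. Once the plane type is secured, Liouville's theorem closes the proof at once.
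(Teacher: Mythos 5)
Your proof is correct. Note first that the paper itself does not prove this theorem: it simply cites Osserman \cite[Theorem 8.1]{Os1986}, so the natural comparison is with Osserman's argument. Both proofs share the same skeleton --- pass to the universal cover, use the Weierstrass data $(g,\eta)$, rotate so that the omitted spherical cap sits at the pole of stereographic projection (making $g$ holomorphic and bounded, and hence $\eta$ nowhere vanishing), show the cover is conformally $\C$, and finish with Liouville. Where you genuinely diverge is in the determination of the conformal type. Osserman invokes the uniformization theorem to reduce to the dichotomy ``disk or plane'' and then rules out the disk by a complex-analytic trick: he considers the local inverse of $w=\int f\,dz$ on a maximal disk $|w|<R$, shows $R<\infty$ (else one gets a bounded nonconstant entire function), and produces a divergent path of finite length from a radius that cannot be crossed, contradicting completeness. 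You instead work directly with the flat metric $|\eta|$, observe that $ds=\tfrac12(1+|g|^2)|\eta|\le\tfrac12(1+R^2)|\eta|$ transfers completeness from $ds$ to $|\eta|$, and apply the standard Riemannian lemma that a local isometry from a complete manifold to a connected manifold is a surjective covering map, so the developing map $\tau=\int\eta$ is a biholomorphism onto $\C$. Your route buys independence from the uniformization theorem and from the maximal-disk argument, at the cost of importing the covering criterion from Riemannian geometry; Osserman's stays entirely within one-variable complex analysis once uniformization is granted. One cosmetic remark: at the very end you conclude only that $X(\Sigma)$ is an open piece of a plane; to match the statement ``must be a plane'' use completeness once more --- with $N$ constant, $X$ is a local isometry from a complete surface into the affine plane, hence a covering of it, so $X(\Sigma)$ is the whole plane.
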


Theorem \ref{min-gauss} has as an immediate consequence of the Bernstein theorem (Theorem \ref{min-Bernstein-thm}). In fact, an entire minimal graph in $\R^3$ is a complete minimal surface 
whose Gauss map is contained in a hemisphere, hence it must be a plane. 

There exists a close relationship between Theorems \ref{min-Heinz} and \ref{min-gauss} (\cite{KK2024, Ro2001}).         
The optimal result for the size of the set of omitted values of the Gauss maps of nonflat complete minimal surfaces in $\R^3$ is given by Fujimoto \cite[Corollary 1.3]{Fu1988}. 
A geometric interpretation for the Fujimoto result is given in \cite{Ka2013}.  
However, the sharp estimate for the number of omitted values of the Gauss maps of nonflat complete minimal surfaces with finite total curvature in $\R^3$ 
is not known (\cite{Os1964}). In \cite{KKM2008, KW2024}, we give a systematic study on the number of omitted values and the total weight of the totally ramified values of the Gauss maps of 
complete minimal surfaces with finite total curvature in $\R^3$ and $\R^4$. 

\subsection{Bernstein-type theorem for CMC graphs in $\R^3$}\label{secE-2}
In this section, we give the uniqueness theorem for entire constant mean curvature (CMC, for short) graphs in $\R^3$ in reference to \cite[Section 1 in Chapter 2]{Ke2003}. 
Heinz \cite{He1955} showed the following mean curvature estimate for graphs in $\R^3$. 

\begin{theorem}\label{CMC-Heinz}
Let $\Delta_{R}$ be the open disk with center at the origin and radius $R\,(>0)$ in $\R^2$, 
and $\Phi (x, y)$ a $C^2$-differentiable function on $\Delta_{R}$. 
If the mean curvature $H$ of the graph $\Gamma_{\Phi}$ satisfies 
the inequality  
\[
|H|\geq \alpha 
\]
for a positive constant $\alpha$ on $\Delta_{R}$, then the following inequality holds: 
\begin{equation}\label{CMC-eq1}
\alpha \leq \dfrac{1}{R}. 
\end{equation} 
\end{theorem}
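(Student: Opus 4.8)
The plan is to derive the estimate from the divergence structure of the mean curvature equation together with the elementary fact that the horizontal part of the upward unit normal has length strictly less than $1$. Writing the graph equation in divergence form as in \eqref{mini-div}, the mean curvature of $\Gamma_{\Phi}$ satisfies
\[
\mathrm{div} \biggl{(} \dfrac{\nabla \Phi}{W} \biggr{)} = 2H,
\]
where $W = \sqrt{1+|\nabla \Phi|^2}$. Setting $T := \nabla \Phi / W$, the decisive pointwise bound is
\[
|T| = \dfrac{|\nabla \Phi|}{\sqrt{1+|\nabla \Phi|^2}} < 1
\]
on all of $\Delta_{R}$. Before integrating, I would observe that $H$ is continuous and, by hypothesis, nowhere zero on the connected set $\Delta_{R}$; hence $H$ has constant sign, and after replacing $\Phi$ by $-\Phi$ if necessary we may assume $H \geq \alpha > 0$ throughout.

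First I would fix a radius $r$ with $0 < r < R$ and apply the divergence theorem to $T$ on the closed disk $\overline{\Delta_{r}}$, on which $\Phi$ is of class $C^2$. Denoting by $\nu$ the outward unit normal along the circle $\partial \Delta_{r}$, this gives
\[
\int_{\Delta_{r}} 2H \, dx\, dy = \int_{\partial \Delta_{r}} T \cdot \nu \, ds.
\]
The right-hand side is controlled by the pointwise bound $|T| < 1$ together with the Cauchy--Schwarz inequality, yielding
\[
\int_{\partial \Delta_{r}} T \cdot \nu \, ds \leq \int_{\partial \Delta_{r}} |T| \, ds < \mathrm{length}(\partial \Delta_{r}) = 2\pi r.
\]
On the other hand, $H \geq \alpha$ forces the left-hand side to grow quadratically in $r$:
\[
\int_{\Delta_{r}} 2H \, dx\, dy \geq 2\alpha \cdot \mathrm{Area}(\Delta_{r}) = 2\pi \alpha r^2.
\]

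Combining the two displays yields $2\pi \alpha r^2 < 2\pi r$, that is, $\alpha r < 1$, for every $r \in (0, R)$. Letting $r \to R^{-}$ then gives $\alpha \leq 1/R$, which is the desired inequality \eqref{CMC-eq1}. The heart of the argument---and its only genuinely geometric input---is the confrontation of the quadratic area growth of the interior integral with the linear perimeter growth of the boundary flux, made possible precisely by the uniform bound $|T| < 1$; everything else is the divergence theorem and a continuity argument for the sign of $H$. I expect the one point requiring real care to be that $\Phi$ is assumed $C^2$ only on the \emph{open} disk $\Delta_{R}$, so that the flux identity cannot be applied directly on $\partial \Delta_{R}$ but must be used on the concentric disks $\Delta_{r}$ with $r < R$ and the conclusion recovered in the limit. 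It is also worth noting that the estimate is sharp, equality being approached by a hemispherical cap of radius $1/\alpha$.
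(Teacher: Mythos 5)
Your proof is correct and is essentially the paper's own argument: both write $2H$ in divergence form, integrate over a disk $\Delta_{r}$ with $r<R$ (Green's/divergence theorem), bound the boundary flux by the perimeter using $|\nabla\Phi|/W<1$ together with Cauchy--Schwarz, bound the interior integral below by $2\pi\alpha r^2$, and let $r\to R$. Your explicit remark that $H$ has constant sign by continuity is a slightly more careful justification of the paper's ``changing the direction of the normal vector if necessary,'' but the route is the same.
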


\begin{proof}
The mean curvature $H$ of $\Gamma_{\Phi}$ is written as 
\[
H = \dfrac{1}{2}\mathrm{div}\biggl{(} \dfrac{\nabla \Phi }{\sqrt{1+ |\nabla \Phi|^2}} \biggr{)} = \frac{1}{2}\Biggl{\{}
\dfrac{\partial}{\partial x} \biggl{(} \dfrac{\Phi_x}{W} \biggr{)} +
\dfrac{\partial}{\partial y} \biggl{(} \dfrac{\Phi_y}{W} \biggr{)} \Biggr{\}}.  
\]
Take any $R_{1}$ satisfying $0< R_{1}< R$. By the Green theorem, we have 
\begin{equation}\label{Green1}
\displaystyle \iint_{\overline{\Delta}_{R_{1}}}\, 2H \, dxdy = \oint_{x^2 +y^2 =R^2_1} \Biggl{(} -\dfrac{\Phi_{y}}{W}\, dx + \dfrac{\Phi_{x}}{W} \, dy \Biggr{)}. 
\end{equation}
Assume that $H\geq \alpha > 0$ by changing the direction of the normal vector if necessary. 
The left-hand side of \eqref{Green1} becomes 
\[
\displaystyle \iint_{\overline{\Delta}_{R_{1}}}\, 2H \, dxdy\geq  2\pi \alpha R^2_{1}. 
\]
On the other hand, by the Cauchy-Schwarz theorem, the right-hand side of \eqref{Green1} becomes 
\begin{eqnarray}
\oint_{x^2 +y^2 =R^2_1} \Biggl{(} -\dfrac{\Phi_{y}}{W}\, dx + \dfrac{\Phi_{x}}{W} \, dy \Biggr{)} &\leq& \oint_{x^2 +y^2 =R^2_1} \sqrt{\dfrac{|\nabla \Phi|^2}{1+|\nabla \Phi|^2}} (dx^2 +dy^2)^{1/2} \nonumber \\
                                                                                                              &\leq & \oint_{x^2 +y^2 =R^2_1} (dx^2 +dy^2)^{1/2} = 2\pi R_{1}. \nonumber
\end{eqnarray} 
Hence $2\pi \alpha R^2_{1} \leq 2\pi R_{1}$. This proof is completed by letting $R_{1}\to R$. 
\end{proof}

As a corollary of Theorem \ref{CMC-Heinz}, we give the following Bernstein-type theorem for CMC graphs in $\R^3$. 

\begin{corollary}\label{CMC-Bern}
If a graph $\Gamma_{\Phi}$ defined on $\R^2$ satisfies $H\equiv \mathrm{constant}$, then it is a plane. 
In other words, any entire CMC graph in $\R^3$ must be a plane. 
\end{corollary}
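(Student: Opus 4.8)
The plan is to split into two cases according to whether the constant value of the mean curvature $H \equiv c$ vanishes or not, and to reduce each case to a result already established in the excerpt. The structural feature I would exploit throughout is that, since $\Gamma_{\Phi}$ is entire, the function $\Phi$ is $C^2$ on the whole plane $\R^2$, hence in particular on every open disk $\Delta_{R}$ centered at the origin, for arbitrarily large radius $R$. This is what makes the Heinz estimate available at all scales.

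First I would dispose of the case $c \neq 0$. Setting $\alpha := |c| > 0$, the hypothesis gives $|H| = |c| = \alpha$ on $\Delta_{R}$ for each $R > 0$, so Theorem \ref{CMC-Heinz} applies and yields the bound $\alpha \leq 1/R$. Since this holds for every $R > 0$, letting $R \to +\infty$ forces $\alpha \leq 0$, contradicting $\alpha > 0$. Hence this case cannot occur, and we must have $c = 0$.

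In the remaining case $c = 0$, the graph is a minimal graph defined on all of $\R^2$, so $\Phi$ is an entire solution of the minimal surface equation \eqref{mini-eq3}. The Bernstein theorem (Theorem \ref{min-Bernstein-thm}) then immediately gives that $\Phi$ is a linear function, so $\Gamma_{\Phi}$ is a plane, which is the desired conclusion.

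I do not anticipate a genuine obstacle here: the substantive content is entirely carried by Theorem \ref{CMC-Heinz} and the Bernstein theorem (Theorem \ref{min-Bernstein-thm}), and the corollary is essentially a packaging of the two. The only point requiring mild care is to notice that the hypothesis ``$H \equiv \mathrm{constant}$'' does not by itself assert $H \equiv 0$; one must separately rule out a nonzero constant, and it is precisely the $R \to +\infty$ limit in the Heinz mean curvature estimate that achieves this.
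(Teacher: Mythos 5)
Your proposal is correct and follows essentially the same route as the paper: apply Theorem \ref{CMC-Heinz} on disks $\Delta_R$ and let $R\to+\infty$ to force $H\equiv 0$, then invoke the Bernstein theorem (Theorem \ref{min-Bernstein-thm}) to conclude the graph is a plane. Your explicit case split on $c\neq 0$ versus $c=0$ is merely a slightly more careful phrasing of the same argument (indeed it handles the hypothesis $|H|\geq\alpha>0$ of Theorem \ref{CMC-Heinz} more precisely than the paper's one-line statement ``$|H|\leq 1/R$''), not a different method.
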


\begin{proof}
By Theorem \ref{CMC-Heinz}, $|H|\leq 1/R$ holds. Thus we obtain $H\equiv 0$ by $R\to \infty$. 
From the Bernstein theorem (Theorem \ref{min-Bernstein-thm}), $\Phi$ is a linear function  
and its graph $\Gamma_{\Phi}$ is a plane in $\R^3$. 
\end{proof} 

The inequality \eqref{CMC-eq1} is optimal because the mean curvature of the graph of $\Phi (x, y) =\sqrt{R^2 -x^2-y^2}$ on $\Delta_{R}$ is constant 
and $H=1/R$. 

We remark that Theorem \ref{CMC-Heinz} was extended to the case of graphic hypersurfaces in ${\R}^{n}$ 
by Chern \cite{Ch1965} and Flanders \cite{Fl1966}.  

\subsection{Bernstein-type theorem for minimal graphs in $\R^4$}\label{secE-3} 
In this section, we explain Bernstein-type results for minimal graphs in $\R^4$ in reference to \cite{HHV2011}. 
For $n=4$, \eqref{mini-nonpara} can be represented as 
\[  
X(x, y) = (x, y, \Phi_{1}(x, y), \Phi_{2}(x, y)) \in \R^4, 
\]
where $\Phi_{1}(x, y), \Phi_{2}(x, y) \in C^{2}(\Omega, \R)$. Then the vector-valued map $\Phi\colon \Omega \to \R^2$ is defined by 
\[
\Phi (x, y) := (\Phi_{1} (x, y), \Phi_{2}(x, y)) 
\]
and the graph of $\Phi$ is given by 
\[
\Gamma_{\Phi} := \{ (x, y, \Phi_{1}(x, y), \Phi_{2}(x, y)) \in \R^4 \, |\, (x, y) \in \Omega \}. 
\]
If $\Omega =\R^2$, then $\Gamma_{\Phi}$ is said to be {\it entire}. If the graph of a vector-valued map from $\Omega$ to $\R^2$ is a minimal surface in $\R^4$,  
we call it a {\it minimal graph} in $\R^4$. There exist many entire minimal graphs in $\R^4$ other than the planes. 
For example, if $\Phi_{1}+\mathrm{i}\Phi_{2} \colon \C \to \C$ is holomorphic or anti-holomorphic, then $\Gamma_{\Phi}$ is an entire minimal graph in $\R^4$ and is called a {\it complex analytic curve} 
(\cite[Chapter 2]{Os1986}). There exists an example that is an entire minimal graph in $\R^4$ but not a complex analytic curve. In fact, Osserman \cite[Chapter 5]{Os1986} constructed an entire minimal 
graph in $\R^4$ over the map $\Phi\colon \R^2 \to \R^2$ which is given by 
\begin{equation}\label{R4_Oss}        
\Phi_{1}(x, y) = \dfrac{1}{2}(e^{x}-3e^{-x}) \cos{\dfrac{y}{2}}, \quad \Phi_{2}(x, y) =-\dfrac{1}{2}(e^{x}-3e^{-x}) \sin{\dfrac{y}{2}}. 
\end{equation}
Hasanis, Savas-Halilaj and Vlachos \cite[Theorem 1.1]{HHV2011} obtained a geometric condition where an entire minimal graph in $\R^4$ is a complex analytic curve.  
Here the {\it Jacobian} of $\Phi (x, y) = (\Phi_{1} (x, y), \Phi_{2}(x, y))$ is defined by 
\[
J_{\Phi} := (\Phi_{1})_{x}(\Phi_{2})_{y} -(\Phi_{1})_{y}(\Phi_{2})_{x}.  
\]
For example, the Jacobian of the Osserman example \eqref{R4_Oss} is given by $J_{\Phi} = -(e^{2x}-9e^{-2x})/8$ and $J_{\Phi}$ takes every  
real value. 

\begin{theorem}\label{R4_Bern}
Assume that $\Gamma_{\Phi}$ is an entire minimal graph in $\R^4$ which is not a plane. Then $\Gamma_{\Phi}$ is a complex analytic curve 
if and only if its Jacobian $J_{\Phi}$ does not take every real value. In particular, if $\Gamma_{\Phi}$ is a complex analytic curve, then 
\begin{enumerate}
\item[(\hspace{.18em}i\hspace{.18em})] $J_{\Phi}$ takes every real value in $(0, +\infty)$ or $[0, +\infty)$ when $\Phi_{1}+\mathrm{i}\Phi_{2}$ is holomorphic, and  
\item[(\hspace{.08em}ii\hspace{.08em})] $J_{\Phi}$ takes every real value in $(-\infty, 0)$ or $(-\infty, 0]$ when $\Phi_{1}+\mathrm{i}\Phi_{2}$ is anti-holomorphic. 
\end{enumerate} 
\end{theorem}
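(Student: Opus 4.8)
My plan is to treat the two implications separately, disposing of the elementary ``only if'' direction together with the range statement, and reserving the Weierstrass machinery for the substantial ``if'' direction.

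\emph{The direction ``complex analytic $\Rightarrow$ $J_{\Phi}$ omits a value'', with the ranges (i)--(ii).} Suppose first that $\Phi_{1}+\mathrm{i}\Phi_{2}$ is holomorphic in $z=x+\mathrm{i}y$, and set $F:=\partial_{z}(\Phi_{1}+\mathrm{i}\Phi_{2})$. The Cauchy--Riemann relations $(\Phi_{1})_{x}=(\Phi_{2})_{y}$, $(\Phi_{1})_{y}=-(\Phi_{2})_{x}$ give at once $J_{\Phi}=(\Phi_{1})_{x}^{2}+(\Phi_{1})_{y}^{2}=|F|^{2}\ge 0$, so $J_{\Phi}$ omits every negative number. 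Since $\Gamma_{\Phi}$ is not a plane, $F$ is a nonconstant entire function, and I would read off the exact range by elementary function theory: if $F$ has no zero then $F=e^{h}$ with $h$ entire nonconstant, so $|F|=e^{\operatorname{Re}h}$ and $\operatorname{Re}h$ is surjective onto $\R$ (otherwise $e^{\pm h}$ is bounded and $h$ is constant), whence $J_{\Phi}$ sweeps out $(0,+\infty)$; if $F$ has a zero then, by the little Picard theorem, its image is $\C$ or $\C$ minus one nonzero point, so $|F|$ attains every nonnegative value and $J_{\Phi}$ sweeps out $[0,+\infty)$. This is exactly (i), and the anti-holomorphic case is identical after replacing $\Phi_{1}+\mathrm{i}\Phi_{2}$ by $\Phi_{1}-\mathrm{i}\Phi_{2}$, yielding $J_{\Phi}=-|\partial_{z}(\Phi_{1}-\mathrm{i}\Phi_{2})|^{2}\le 0$ and the ranges in (ii).

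\emph{The direction ``$J_{\Phi}$ omits a value $\Rightarrow$ complex analytic''.} Here I would pass to a global isothermal parameter $\zeta$ on the minimal graph and use the Weierstrass data $\psi_{k}:=\partial x_{k}/\partial\zeta$ $(k=1,\dots,4)$, which are holomorphic and satisfy $\sum_{k}\psi_{k}^{2}=0$. Writing
\[
P:=\psi_{1}+\mathrm{i}\psi_{2},\quad Q:=\psi_{1}-\mathrm{i}\psi_{2},\quad R:=\psi_{3}+\mathrm{i}\psi_{4},\quad S:=\psi_{3}-\mathrm{i}\psi_{4},
\]
conformality becomes $PQ+RS=0$, while the fact that $\Gamma_{\Phi}$ is a graph over the $(x,y)$-plane means that $\zeta\mapsto z$ is an orientation-preserving diffeomorphism, i.e.\ $|P|^{2}-|Q|^{2}>0$ everywhere; in particular $P$ is zero-free. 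A chain-rule computation expressing $\partial_{z}\Phi$ and $\partial_{\bar z}\Phi$ through $P,Q,R,S$ then yields the clean formula
\[
J_{\Phi}=|\partial_{z}\Phi|^{2}-|\partial_{\bar z}\Phi|^{2}=\frac{|R|^{2}-|S|^{2}}{|P|^{2}-|Q|^{2}},
\]
whose denominator is positive, so $\operatorname{sign}J_{\Phi}=\operatorname{sign}(|R|^{2}-|S|^{2})$; holomorphicity corresponds to $S\equiv 0$ and anti-holomorphicity to $R\equiv 0$. The endgame is then two applications of Liouville's theorem. Since $P$ is zero-free and $|Q/P|<1$, the entire function $Q/P$ is bounded, hence equal to a constant $\lambda$ with $|\lambda|<1$. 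If $\lambda=0$ then $Q\equiv 0$, so $RS\equiv 0$ forces $R\equiv 0$ or $S\equiv 0$ and we are done. If $\lambda\ne 0$, then $RS=-\lambda P^{2}$ is zero-free, so both $R$ and $S$ are zero-free, and using $|P|^{2}=|R||S|/|\lambda|$ one rewrites
\[
J_{\Phi}=\frac{|\lambda|}{1-|\lambda|^{2}}\Bigl(\bigl|R/S\bigr|-\bigl|S/R\bigr|\Bigr).
\]
Because $t\mapsto t-t^{-1}$ maps $(0,+\infty)$ bijectively onto $\R$, everything hinges on the range of $|R/S|$. The ratio $R/S$ is a zero-free entire function, so $R/S=e^{\kappa}$; if $\kappa$ is nonconstant then $\operatorname{Re}\kappa$ is surjective and $J_{\Phi}$ takes every real value, contradicting the hypothesis, while if $\kappa$ is constant then all $\psi_{k}$ are constant multiples of $P$, so $\Gamma_{\Phi}$ is an affine plane (parametrized by a primitive of $P\,d\zeta$), contradicting the standing assumption. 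Hence $\lambda=0$ and $\Gamma_{\Phi}$ is a complex analytic curve.

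\emph{Main obstacle.} The delicate point on which the whole second part rests is that the two Liouville steps require the minimal graph to be of \emph{parabolic} conformal type, so that $\zeta$ ranges over all of $\C$ and $P,Q,R,S$ are genuinely entire. Unlike the codimension-one case, where the Gauss map lies in a hemisphere, here each factor of the Gauss map may well be dense, so no omission property is available; I therefore expect the crux of a complete write-up to be a separate verification that an entire two-dimensional minimal graph in $\R^{4}$ is conformally $\C$ (for instance via a volume-growth or capacity estimate), after which the algebraic argument above closes the theorem.
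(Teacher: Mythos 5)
The paper itself contains no proof of Theorem \ref{R4_Bern}: the author defers entirely to \cite[Chapter 3]{HHV2011}. Your proposal is, in strategy, a reconstruction of that cited argument, and its algebraic core is correct. In the ``only if'' direction, the Cauchy--Riemann equations do give $J_{\Phi}=|F|^{2}$ with $F=\partial_{z}(\Phi_{1}+\mathrm{i}\Phi_{2})$, and your Liouville/little-Picard dichotomy (according to whether the nonconstant entire function $F$ is zero-free) yields exactly the ranges $(0,+\infty)$ and $[0,+\infty)$ of item (\hspace{.18em}i\hspace{.18em}), and likewise item (\hspace{.08em}ii\hspace{.08em}). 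In the ``if'' direction, the identities $PQ+RS=0$ and $J_{\Phi}=(|R|^{2}-|S|^{2})/(|P|^{2}-|Q|^{2})$ are correct (the latter is multiplicativity of Jacobians along $\zeta\mapsto z\mapsto w$), the graph condition makes $Q/P$ a bounded entire function, hence a constant $\lambda$ with $|\lambda|<1$, and your case analysis is sound: $\lambda\neq 0$ forces either a plane (excluded by hypothesis) or, via surjectivity of $\operatorname{Re}\kappa$ for nonconstant entire $\kappa$, that $J_{\Phi}$ attains every real value (excluded by hypothesis); while $\lambda=0$ gives $Q\equiv 0$, then $RS\equiv 0$, and the identity theorem gives $S\equiv 0$ or $R\equiv 0$. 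Two points deserve tightening: ``orientation-preserving'' requires replacing $\zeta$ by $\bar{\zeta}$ if necessary so that $|P|^{2}-|Q|^{2}>0$; and the passage from $S\equiv 0$ (resp. $R\equiv 0$) to holomorphy (resp. anti-holomorphy) should be drawn from the explicit inversion $\Phi_{\bar z}=(\bar{S}P-R\bar{Q})/(|P|^{2}-|Q|^{2})$, $\Phi_{z}=(R\bar{P}-\bar{S}Q)/(|P|^{2}-|Q|^{2})$ of the chain rule, combined with $Q\equiv 0$.

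The step you flag as the main obstacle --- that an entire two-dimensional minimal graph in $\R^{4}$ is conformally $\C$, so that the Weierstrass data are genuinely entire --- is indeed the essential analytic input, but it is not something you would need to build from scratch: it is a classical theorem of Osserman, proved in \cite[Chapter 5]{Os1986} (the very book this section quotes for the example \eqref{R4_Oss}), and it is precisely what Hasanis, Savas-Halilaj and Vlachos invoke at the outset of their proof. Moreover, its proof is not a volume-growth or capacity estimate, as you anticipate, but a J\"orgen--Lewy type potential argument in the spirit of {\S}\ref{secE-1}: the divergence form of the system \eqref{mini-eq} furnishes convex potentials, and the associated change of variables is a global diffeomorphism of $\R^{2}$ supplying global isothermal parameters. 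With that citation substituted for your ``separate verification,'' your proof is complete.
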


See \cite[Chapter 3]{HHV2011} for a proof of this theorem. We here explain Bernstein-type results obtained by applying Theorem \ref{R4_Bern}, which is described in \cite[Chapter 4]{HHV2011}. 
As a corollary of Theorem \ref{R4_Bern}, we can immediately show the following result \cite[Theorem 1.1]{HHV2009} .

\begin{theorem}\label{R4_HHV2009}
Assume that $\Gamma_{\Phi}$ is an entire minimal graph in $\R^4$. If the Jacobian $J_{\Phi}$ of $\Phi$ is bounded, then $\Gamma_{\Phi}$ must be a plane.  
\end{theorem}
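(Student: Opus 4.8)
The plan is to argue by contradiction and to feed the boundedness hypothesis directly into Theorem \ref{R4_Bern}, which does all the analytic work and which I am allowed to assume. Suppose that $\Gamma_{\Phi}$ is an entire minimal graph in $\R^4$ whose Jacobian $J_{\Phi}$ is bounded, and suppose toward a contradiction that $\Gamma_{\Phi}$ is not a plane. The first, essentially trivial, observation is that a bounded function cannot assume every real value: if $|J_{\Phi}|\leq M$ for some constant $M$, then $J_{\Phi}$ omits, for instance, every value exceeding $M$. Hence $J_{\Phi}$ does not take every real value.

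Next I would invoke the equivalence in Theorem \ref{R4_Bern}. Since $\Gamma_{\Phi}$ is a non-planar entire minimal graph whose Jacobian fails to take every real value, the "if and only if" assertion forces $\Gamma_{\Phi}$ to be a complex analytic curve; equivalently, $\Phi_{1}+\mathrm{i}\Phi_{2}\colon \C \to \C$ is either holomorphic or anti-holomorphic.

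Finally I would extract the contradiction from the "in particular" clause of Theorem \ref{R4_Bern}. If $\Phi_{1}+\mathrm{i}\Phi_{2}$ is holomorphic, then $J_{\Phi}$ takes every value in $(0,+\infty)$ (or in $[0,+\infty)$), so $J_{\Phi}$ is unbounded above; if $\Phi_{1}+\mathrm{i}\Phi_{2}$ is anti-holomorphic, then $J_{\Phi}$ takes every value in $(-\infty,0)$ (or in $(-\infty,0]$), so $J_{\Phi}$ is unbounded below. In either case $J_{\Phi}$ is unbounded, contradicting the hypothesis. Therefore $\Gamma_{\Phi}$ must be a plane.

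I do not expect any real obstacle here, as this statement is a genuine corollary: all the substance is packaged inside Theorem \ref{R4_Bern}. The only point that requires any care is to verify that both alternatives, holomorphic and anti-holomorphic, independently violate boundedness, rather than just one of them. The conclusion uses only that each admissible half-line of Jacobian values is unbounded, so boundedness is incompatible with $\Gamma_{\Phi}$ being a complex analytic curve; and since Theorem \ref{R4_Bern} leaves the plane as the sole remaining possibility for a non-planar entire minimal graph once the complex-analytic case is excluded, the plane is forced.
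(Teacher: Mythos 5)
Your proposal is correct and follows essentially the same route as the paper: the paper's own proof is just the two-line observation that boundedness gives $J_{\Phi}(\R^2)\subset[-M,M]$ and then cites Theorem \ref{R4_Bern}, with exactly the contradiction you spell out (a non-planar graph with a Jacobian omitting values would be a complex analytic curve, whose Jacobian fills an unbounded half-line by the ``in particular'' clause) left implicit. Your version merely makes explicit the case check of the holomorphic and anti-holomorphic alternatives, which is a fair expansion of the paper's compressed argument rather than a different approach.
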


\begin{proof}
Since $J_{\Phi}$ is bounded, there exists a positive constant $M$ such that $J_{\Phi}(\R^2) \subset [-M, M]$. By Theorem \ref{R4_Bern}, $\Gamma_{\Phi}$ is a plane. 
\end{proof}

By applying Theorem \ref{R4_Bern}, we can show the following result due to Schoen \cite{Sc1993}. 

\begin{theorem}\label{R4_Schoen}
Let $\Phi = (\Phi_{1}, \Phi_{2}) \colon \R^2 \to \R^2$ be a solution of the system of minimal surface equations \eqref{mini-eq}. 
If $\Phi$ is a diffeomorphism, then $\Phi$ is an affine map. 
\end{theorem}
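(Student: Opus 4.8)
The plan is to combine Theorem \ref{R4_Bern} with the classical fact that an injective entire function on $\C$ must be affine. First I would observe that, since $\Phi$ is a diffeomorphism of $\R^2$, its differential is everywhere invertible, so the Jacobian $J_{\Phi}$ never vanishes. As $J_{\Phi}$ is continuous and $\R^2$ is connected, $J_{\Phi}$ keeps a constant sign; in particular $J_{\Phi}$ never takes the value $0$, and hence does \emph{not} take every real value.

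Next I would argue by contradiction: suppose $\Gamma_{\Phi}$ is not a plane. Since $J_{\Phi}$ does not take every real value, Theorem \ref{R4_Bern} forces $\Gamma_{\Phi}$ to be a complex analytic curve, i.e. $F := \Phi_{1} + \mathrm{i}\Phi_{2}$ is holomorphic or anti-holomorphic on $\C \cong \R^2$. I would treat the holomorphic case first; the anti-holomorphic case is identical after replacing $\Phi_{2}$ by $-\Phi_{2}$, which is a linear change of target that preserves both injectivity and affinity (and sends anti-holomorphic to holomorphic). Because $\Phi$ is a diffeomorphism it is in particular injective, so $F$ is an injective entire function.

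The heart of the matter is then the complex-analytic lemma that an injective entire function is affine. I would prove this by examining the isolated singularity of $F$ at infinity: it cannot be essential, for otherwise by the Casorati--Weierstrass theorem $F$ would come arbitrarily close to every value on a neighborhood of infinity, and the open mapping theorem applied to a fixed bounded disk would then produce two distinct points with the same image, contradicting injectivity. Hence infinity is a pole and $F$ is a polynomial; an injective polynomial takes each value only once counting multiplicity, so it has degree one, i.e. $F(z) = az + b$ with $a \neq 0$. Consequently $\Phi = (\Phi_{1}, \Phi_{2})$ is affine, whence $\Gamma_{\Phi}$ is a plane, contradicting our assumption. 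Therefore $\Gamma_{\Phi}$ is a plane and $\Phi$ is affine.

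I expect the main difficulty to lie not in any single computation but in organizing this reduction cleanly: the Jacobian-sign step is elementary, and the statement of Theorem \ref{R4_Bern} carries the geometric weight. The only genuinely substantive ingredient is the ``injective entire $\Rightarrow$ affine'' lemma, whose one delicate point is ruling out an essential singularity at infinity via Casorati--Weierstrass together with the open mapping theorem.
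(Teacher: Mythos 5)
Your proposal is correct and follows essentially the same route as the paper: the constant sign of $J_{\Phi}$ shows the Jacobian misses $0$, Theorem \ref{R4_Bern} then makes $\Phi_{1}+\mathrm{i}\Phi_{2}$ holomorphic or anti-holomorphic, and injectivity forces it to be affine. The only difference is cosmetic: the paper cites the characterization $\mathrm{Aut}(\C)=\{az+b \,|\, a,b\in\C,\, a\neq 0\}$ (using bijectivity of $\Phi$), whereas you prove the underlying fact---an injective entire function is affine---directly via Casorati--Weierstrass and the open mapping theorem, which is precisely how that characterization of $\mathrm{Aut}(\C)$ is established.
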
  

\begin{proof}
Since $\Phi$ is a diffeomorphism, it follows that $J_{\Phi} > 0$ if $\Phi$ is orientation preserving, or $J_{\Phi}< 0$ if $\Phi$ is orientation reversing. By Theorem \ref{R4_Bern}, 
the complex function $\Phi_{1}+\mathrm{i}\Phi_{2}$ must be holomorphic or anti-holomorphic. Since the analytic automorphism group of $\C$ is given by 
\[
\mathrm{Aut}(\C) = \{ w=az+b \, | \, a, b\in \C, a\not= 0 \}, 
\]
$\Phi_{1}+\mathrm{i}\Phi_{2}$ is a linear function of $z=x+\mathrm{i}y$. Thus $\Phi$ must be affine.  
\end{proof}

Finally, we give an alternative proof of the following Bernstein-type theorem for special Lagrangian equation due to \cite{Fu1998} and \cite{Yu2002}. 

\begin{theorem}\label{R4_SL}
Let $\varphi \in C^{2} (\mathbf{R}^2, \mathbf{R})$ be a solution of the special Lagrangian 
equation 
\begin{equation}\label{SL_eq}
\cos{\theta}(\varphi_{xx}+\varphi_{yy}) -\sin{\theta}(\varphi_{xx}\varphi_{yy} -\varphi^2_{xy} -1) = 0, 
\end{equation}
where $\theta$ is a real constant. Then $\varphi$ is harmonic or quadratic.  
\end{theorem}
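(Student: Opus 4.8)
The plan is to study $\varphi$ through its gradient graph. Set $\Phi := \nabla\varphi = (\varphi_x, \varphi_y)$, so that $\Gamma_{\Phi} = \{(x, y, \varphi_x, \varphi_y)\}$ is an entire graph in $\R^4$. The first observation is that the special Lagrangian equation \eqref{SL_eq} is precisely the condition that the Lagrangian angle $\arctan\lambda_1 + \arctan\lambda_2$ of this gradient graph be the constant $\theta$ (up to sign), where $\lambda_1,\lambda_2$ are the eigenvalues of the Hessian $D^2\varphi$; equivalently, a direct computation shows that $\Phi=\nabla\varphi$ solves the system of minimal surface equations \eqref{mini-eq}. Hence $\Gamma_{\Phi}$ is an entire minimal graph in $\R^4$ (and, by elliptic regularity, $\varphi$ is real-analytic, so $\Phi \in C^2$ and Theorem \ref{R4_Bern} applies). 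The relevant quantities are the trace $\sigma := \varphi_{xx}+\varphi_{yy} = \lambda_1+\lambda_2$ and the Jacobian $J_{\Phi} = \varphi_{xx}\varphi_{yy}-\varphi_{xy}^2 = \det D^2\varphi = \lambda_1\lambda_2 =: \delta$; in this notation \eqref{SL_eq} reads $\cos\theta\,\sigma = \sin\theta\,(\delta-1)$.

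Next I would dispose of the two degenerate choices of $\theta$. If $\sin\theta = 0$ then $\cos\theta\neq 0$ and the equation gives $\sigma = \Delta\varphi = 0$, so $\varphi$ is harmonic. If $\cos\theta = 0$ then $\sin\theta \neq 0$ and the equation gives $\delta = \varphi_{xx}\varphi_{yy}-\varphi_{xy}^2 = 1$, i.e. $\varphi$ solves the Monge--Amp\`ere equation \eqref{min-MA}; by the J\"orgen theorem (Theorem \ref{min-Jor}) $\varphi$ is then a quadratic polynomial. Both subcases land in the desired conclusion.

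The heart of the argument is the generic case $\sin\theta\cos\theta\neq 0$. Here, at every point, $(\sigma,\delta)$ satisfies the linear relation $\sigma = \tan\theta\,(\delta-1)$, while the reality of the eigenvalues of the symmetric matrix $D^2\varphi$ forces $\sigma^2 \geq 4\delta$. Substituting, $\delta$ must satisfy $\tan^2\theta\,(\delta-1)^2 \geq 4\delta$, a quadratic inequality with positive leading coefficient whose discriminant is positive; thus it has two distinct real roots $\delta_\pm$ with product $\delta_-\delta_+ = 1$ and positive sum, so $0 < \delta_- < 1 < \delta_+$. Therefore $J_{\Phi} = \delta$ avoids the nonempty open interval $(\delta_-,\delta_+) \subset (0,+\infty)$, so $J_{\Phi}$ does not take every real value. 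Applying Theorem \ref{R4_Bern}, $\Gamma_{\Phi}$ is either a plane or a complex analytic curve. If it is a plane then $\Phi=\nabla\varphi$ is affine, so $\varphi$ is quadratic. To exclude the complex analytic case I would use parts (i) and (ii) of Theorem \ref{R4_Bern}: if $\Phi_1+\mathrm{i}\Phi_2$ were holomorphic then $J_{\Phi}$ would attain every value in an interval of the form $(0,+\infty)$ or $[0,+\infty)$, both of which contain $(\delta_-,\delta_+)$, contradicting the omission; if it were anti-holomorphic then $\partial_z(\varphi_x+\mathrm{i}\varphi_y) = \tfrac{1}{2}\Delta\varphi = 0$ forces $\varphi$ harmonic, whence $\sigma = 0$ and \eqref{SL_eq} gives $\delta = 1$, contradicting $\delta = \det D^2\varphi = -\varphi_{xx}^2-\varphi_{xy}^2 \leq 0$ for harmonic $\varphi$. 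Thus only the plane occurs and $\varphi$ is quadratic.

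The main obstacle I anticipate is the first step: verifying cleanly that a solution of \eqref{SL_eq} really produces a minimal gradient graph satisfying \eqref{mini-eq}. This is where the special Lagrangian structure (the calibration by $\mathrm{Re}(e^{-\mathrm{i}\theta}\,dz_1\wedge dz_2)$, or the Lagrangian-angle identity) must be invoked, together with the elliptic regularity that upgrades $\varphi$ to a smooth function. A secondary delicate point is the exclusion of anti-holomorphic complex analytic curves: the Jacobian range alone does not rule them out, since their Jacobian lives in $(-\infty,0]$, which is consistent with avoiding $(\delta_-,\delta_+)$; one genuinely needs the harmonicity identity $\partial_z(\varphi_x+\mathrm{i}\varphi_y) = \tfrac{1}{2}\Delta\varphi$ combined with the equation to reach the contradiction $\delta = 1$ versus $\delta \leq 0$.
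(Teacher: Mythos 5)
Your proof is correct, and it rests on the same foundation as the paper's: pass to the gradient graph $\Gamma_{\Phi}$, $\Phi=\nabla\varphi$, which is an entire minimal graph in $\R^4$ by the Harvey--Lawson result, and then constrain the range of $J_{\Phi}=\det D^2\varphi$ so that Theorem \ref{R4_Bern} forces either a plane or a complex analytic curve. Where you genuinely diverge is the case decomposition and the mechanism for the range restriction. The paper splits on whether $J_{\Phi}$ attains the value $1$: if $J_{\Phi}(x_0,y_0)=1$ at some point, then $\varphi_{xx}\varphi_{yy}=1+\varphi_{xy}^2>0$ there, so $\varphi_{xx}+\varphi_{yy}\neq 0$, which via \eqref{SL_eq} forces $\cos\theta=0$ and hence $J_{\Phi}\equiv 1$; if instead $J_{\Phi}$ never equals $1$, continuity gives $J_{\Phi}>1$ or $J_{\Phi}<1$ everywhere, and in each case Theorem \ref{R4_Bern} alone finishes, with the subcase $J_{\Phi}<1$ producing the anti-holomorphic (hence, by Cauchy--Riemann, harmonic) alternative. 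You instead split on $\theta$: the cases $\sin\theta=0$ (harmonic directly) and $\cos\theta=0$ (Monge--Amp\`ere \eqref{min-MA}, settled by J\"orgen's theorem, Theorem \ref{min-Jor}, rather than by Theorem \ref{R4_Bern}), and in the generic case $\sin\theta\cos\theta\neq 0$ you combine the pointwise relation $\sigma=\tan\theta\,(\delta-1)$ with the spectral inequality $\sigma^2\geq 4\delta$ to show $J_{\Phi}$ omits an entire open interval $(\delta_-,\delta_+)\ni 1$ --- a quantitative step with no counterpart in the paper --- and then kill the anti-holomorphic alternative by the contradiction $\delta=1$ versus $\delta\leq 0$ for harmonic $\varphi$. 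What your route buys: it makes transparent exactly when each conclusion occurs (nontrivial harmonic solutions exist only for $\sin\theta=0$), and the discriminant argument gives an explicit omitted interval rather than a single omitted value. What the paper's route buys: economy --- it needs neither J\"orgen's theorem nor the inequality $\sigma^2\geq 4\delta$, since omitting the single value $1$ already suffices for Theorem \ref{R4_Bern}. Both proofs exploit the same identity (anti-holomorphicity of $\varphi_x+\mathrm{i}\varphi_y$ is equivalent to $\Delta\varphi=0$); the paper uses it affirmatively to output the harmonic case, while you use it for a contradiction in the generic case. You were also right to flag the regularity point (that $\varphi\in C^2$ must be upgraded so that $\Phi=\nabla\varphi$ is a genuine $C^2$ solution of \eqref{mini-eq}), which the paper passes over silently in citing Harvey--Lawson.
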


\begin{proof}
Set $\Phi = \nabla \varphi = (\varphi_{x}, \varphi_{y}) \colon \R^2 \to \R^2$. Since $\varphi$ satisfies \eqref{SL_eq}, 
by the Harvey-Lawson result \cite{HL1982}, the graph of $\Phi$ is an entire minimal graph in $\R^4$. Then we have $J_{\Phi} = \Phi_{xx}\Phi_{yy} -\Phi^2_{xy}$. 
Suppose at first that there exists a point $(x_{0}, y_{0})\in \R^2$ such that $J_{\Phi}(x_{0}, y_{0})=1$. Then we obtain $\varphi_{xx}(x_{0}, y_{0}) + \varphi_{yy}(x_{0}, y_{0}) \not= 0$, 
thus we have $\theta =\pi/ 2$ and $J_{\Phi}\equiv 1$. By Theorem \ref{R4_Bern}, $\Phi$ must be affine and thus $\varphi$ is quadratic.   

Suppose next that $J_{\Phi} (x, y)\not= 1$ for any $(x, y)\in \R^2$.  By Theorem \ref{R4_Bern}, we have either $J_{\Phi} >1$ or $J_{\Phi}< 1$. If $J_{\Phi}> 1$ holds, according to 
Theorem \ref{R4_Bern}, $\Phi$ must be affine and so $\varphi$ is quadratic. If $J_{\Phi}< 1$, by virtue of Theorem \ref{R4_Bern}, we obtain that $J_{\Phi}\leq 0$ and 
$\varphi_{x} +\mathrm{i} \varphi_{y}$ is an anti-holomorphic function. Thus, by the Cauchy-Riemann equations, $\varphi$ is harmonic.   
\end{proof}

We remark that Lee \cite{Le2017} gave a short proof of Theorem \ref{R4_SL} by using the J\"orgen theorem (Theorem \ref{min-Jor}).  

\section{Lorentz-Minkowski space}\label{chap-L}

\subsection{Calabi-Bernstein theorem for maximal space-like graphs in $\L^3$}\label{secL-1}
We denote by ${\L}^{n} = (\R^{n}, \langle \,  \,  ,  \, \rangle_{L})\, (n\geq 3)$ the Lorentz-Minkowski $n$-space with the Lorentz metric 
\[
\langle (p_{1}, \ldots , p_{n}), (q_{1}, \ldots , q_{n}) \rangle_{L} := p_{1}q_{1} + \cdots + p_{n-1}q_{n-1} -p_{n}q_{n},  
\]
where $ (p_{1}, \ldots , p_{n}), (q_{1}, \ldots , q_{n}) \in \R^{n}$. Let $\Omega$ be a domain in the Euclidean $2$-space $\R^2$.  
An immersion $X\colon \Omega \to \L^n$ is called {\it space-like} 
if the induced metric $g:= X^{\ast} \langle \,  \,  ,  \, \rangle_{L}$ is positive definite on $\Omega$.  
A space-like surface $X(\Omega) \subset \L^n$ is a {\it maximal surface} if its mean curvature $H$ vanishes at every point of $\Omega$. 
Let $\Psi\colon \Omega \to \R$ be a $C^2$-differentiable function. We consider the graph 
\[
\Gamma_{\Psi} := \{ (x, y, \Psi (x, y)) \in \L^3 \, |\, (x, y) \in \Omega \}
\] 
of $\Psi$. If the graph $\Gamma_{\Psi}$ of $\Psi$ is space-like, its gradient $\nabla \Psi =(\Psi_{x}, \Psi_{y})$ satisfies $|\nabla \Psi|=\sqrt{(\Psi_{x})^2 +(\Psi_{y})^2}< 1$ on $\Omega$. 
Moreover, if a space-like graph $\Gamma_{\Psi}$ in $\L^3$ is a maximal surface (we call it a {\it maximal space-like graph}), then $\Psi$ satisfies the following second order nonlinear 
elliptic partial differential equation:   
\begin{equation}\label{max-ZMC}
(1-\Psi^2_{y})\Psi_{xx} +2\Psi_{x}\Psi_{y}\Psi_{xy} +(1-\Psi^2_{x})\Psi_{yy} = 0. 
\end{equation}
The equation \eqref{max-ZMC} is called the {\it zero mean curvature equation} (ZMC equation, for short) of $\L^3$. 
The equation \eqref{max-ZMC} is also represented as 
\begin{equation}\label{max-div}
\mathrm{div} \left( \dfrac{\nabla \Psi }{\sqrt{1- |\nabla \Psi|^2}} \right) = 
\dfrac{\partial}{\partial x} \left( \dfrac{\Psi_x}{\widetilde{W}} \right) +
\dfrac{\partial}{\partial y} \left( \dfrac{\Psi_y}{\widetilde{W}} \right) =0, 
\end{equation}  
where $\widetilde{W}:=\sqrt{1- |\nabla \Psi|^2}=\sqrt{1-\Psi^{2}_{x}-\Psi^{2}_{y}}$. 

For entire maximal space-like graphs in $\L^3$, the following uniqueness theorem, called the Calabi-Bernstein theorem, is well-known.   

\begin{theorem}[Calabi-Bernstein theorem]\label{CB-thm1}
If a $C^2$-differentiable function $\Psi (x, y)$ on $\R^2$ satisfies $|\nabla \Psi|< 1$ and \eqref{max-ZMC}, 
then $\Psi$ is a linear function of $x$ and $y$.  
\end{theorem}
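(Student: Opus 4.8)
The plan is to deduce the statement from the classical Bernstein theorem (Theorem \ref{min-Bernstein-thm}) by means of a duality that turns an entire maximal space-like graph in $\L^3$ into an entire minimal graph in $\R^3$. The starting observation is that the hypothesis is exactly the divergence form \eqref{max-div} of the ZMC equation, namely $\mathrm{div}(\nabla\Psi/\widetilde{W})=0$ with $\widetilde{W}=\sqrt{1-|\nabla\Psi|^2}>0$; here the gradient bound $|\nabla\Psi|<1$ is precisely what makes $\widetilde{W}$ real and strictly positive, so that the whole construction is legitimate. This divergence-free condition is equivalent to the closedness on $\R^2$ of the $1$-form $\omega := -(\Psi_y/\widetilde{W})\,dx + (\Psi_x/\widetilde{W})\,dy$.

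First I would invoke the Poincar\'e lemma on the simply connected plane $\R^2$ to produce a primitive $\Phi$ with $d\Phi=\omega$, that is, $\Phi_x=-\Psi_y/\widetilde{W}$ and $\Phi_y=\Psi_x/\widetilde{W}$. Since $\Psi\in C^2$ (indeed real analytic by elliptic regularity) and $\widetilde{W}>0$ everywhere, $\Phi$ is a well-defined $C^2$ function on all of $\R^2$; crucially, it is defined over the whole plane, so its graph $\Gamma_{\Phi}$ is entire. This is the step where the entire hypothesis on $\Psi$ is transported to the dual surface, and it is the reason the global topology of the domain enters.

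The key computation is the identity linking the two area factors. From $\Phi_x^2+\Phi_y^2=(\Psi_x^2+\Psi_y^2)/\widetilde{W}^2$ one obtains $1+|\nabla\Phi|^2 = 1/(1-|\nabla\Psi|^2)=1/\widetilde{W}^2$, that is, $W=1/\widetilde{W}$. Consequently $\Phi_x/W=-\Psi_y$ and $\Phi_y/W=\Psi_x$, and therefore
\[
\mathrm{div}\!\left(\dfrac{\nabla\Phi}{W}\right)=\dfrac{\partial}{\partial x}(-\Psi_y)+\dfrac{\partial}{\partial y}(\Psi_x)=-\Psi_{yx}+\Psi_{xy}=0.
\]
Comparing with \eqref{mini-div}, this shows that $\Phi$ solves the minimal surface equation \eqref{mini-eq3}, so $\Gamma_{\Phi}$ is an entire minimal graph in $\R^3$.

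To finish I would apply the Bernstein theorem (Theorem \ref{min-Bernstein-thm}) to conclude that $\Phi$ is a linear function, whence $\Phi_x$ and $\Phi_y$ are constants and so is $W=\sqrt{1+|\nabla\Phi|^2}$. Reading back the relations $\Psi_y=-\Phi_x/W$ and $\Psi_x=\Phi_y/W$ then shows that $\nabla\Psi$ is constant, i.e. $\Psi$ is a linear function of $x$ and $y$, as required. The main obstacle is not the final calculation but the duality itself: one must verify that the divergence structure of the ZMC equation yields, through the Poincar\'e lemma on the simply connected domain $\R^2$, a \emph{globally} defined conjugate potential, and that the algebraic identity $W\widetilde{W}=1$ converts the Lorentzian (maximal) equation into the Euclidean (minimal) one. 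Once this correspondence is set up --- this being the content of the duality Lemma \ref{max-dual} --- the theorem reduces immediately to the Euclidean Bernstein theorem.
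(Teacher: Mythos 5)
Your proof is correct and follows essentially the same route as the paper: you construct the dual entire minimal graph via the Poincar\'e lemma (which is exactly the content of Lemma \ref{max-dual}(ii), up to an immaterial sign convention in the conjugate potential) and then invoke the Bernstein theorem (Theorem \ref{min-Bernstein-thm}). Your explicit verification that $W\widetilde{W}=1$ reduces the minimal surface equation to the equality of mixed partials is a clean way of filling in the step the paper leaves as ``easily checked.''
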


The Calabi-Bernstein theorem can be interpreted geometrically as stating that any entire maximal space-like graph in $\L^3$ must be a plane.  

Various proofs of the Calabi-Bernstein theorem are known since Calabi \cite{Ca1970} first showed it. 
In this paper, we prove it by using the following result which shows that there exists a duality between minimal graphs in $\R^3$ and 
maximal space-like graphs in $\L^3$. This duality is well-known and is called the {\it Calabi correspondence} (\cite{AP2001, Ca1970, Le2011, Le2013, Sh1956}).  

\begin{lemma}\label{max-dual}
Let $\Omega \subset \R^2$ be a simply-connected domain. 
\begin{enumerate}
\item[(\hspace{.18em}i\hspace{.18em})] If $\Phi \in C^{2} (\Omega, \R)$ satisfies the minimal surface equation \eqref{mini-eq3}, then there exists a function $\Psi\in C^{2} (\Omega, \R)$ satisfying 
$|\nabla \Psi|< 1$, the ZMC equation \eqref{max-ZMC} and 
\begin{equation}\label{max-dual-eq1}
\left(
    \begin{array}{c}
      \Psi_{x} \\
      \Psi_{y}
    \end{array}
  \right)
= \dfrac{1}{\sqrt{1+\Phi^2_{x}+\Phi^2_{y}}} \left(
    \begin{array}{c}
      -\Phi_{y} \\
      \Phi_{x}
    \end{array}
  \right). 
\end{equation}
\item[(\hspace{.08em}ii\hspace{.08em})] If $\Psi \in C^{2} (\Omega, \R)$ satisfies $|\nabla \Psi|< 1$ and the ZMC equation \eqref{max-ZMC}, then there exists a function $\Phi\in C^{2} (\Omega, \R)$ 
satisfying the minimal surface equation \eqref{mini-eq3} and 
\begin{equation}\label{max-dual-eq2}
\left(
    \begin{array}{c}
      \Phi_{x} \\
      \Phi_{y}
    \end{array}
  \right)
= \dfrac{1}{\sqrt{1-\Psi^2_{x}-\Psi^2_{y}}} \left(
    \begin{array}{c}
      \Psi_{y} \\
      -\Psi_{x}
    \end{array}
  \right). 
\end{equation}
\end{enumerate}
\end{lemma}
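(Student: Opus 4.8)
The plan is to handle the two directions in a symmetric fashion: in each case I construct the dual function as the potential of a suitable closed $1$-form via the Poincaré lemma, and then verify the target equation by reducing it to the equality of mixed second partial derivatives. The engine throughout is the observation that the ``angle functions'' $W=\sqrt{1+\Phi_x^2+\Phi_y^2}$ and $\widetilde W=\sqrt{1-\Psi_x^2-\Psi_y^2}$ turn out to be reciprocals of one another along the correspondence.

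For part (i) I would start from the divergence form \eqref{mini-div} of the minimal surface equation, $\partial_x(\Phi_x/W)+\partial_y(\Phi_y/W)=0$, and consider the $1$-form
\[
\omega := -\frac{\Phi_y}{W}\,dx + \frac{\Phi_x}{W}\,dy .
\]
Its exterior derivative is
\[
d\omega = \Bigl[\,\partial_x(\Phi_x/W) + \partial_y(\Phi_y/W)\,\Bigr]\,dx\wedge dy,
\]
which vanishes precisely by \eqref{mini-div}. Since $\Omega$ is simply connected, the Poincaré lemma furnishes $\Psi\in C^2(\Omega,\R)$ with $d\Psi=\omega$, that is, $\Psi_x=-\Phi_y/W$ and $\Psi_y=\Phi_x/W$, which is exactly \eqref{max-dual-eq1}.

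It then remains to check the gradient bound and the ZMC equation. A direct computation gives $|\nabla\Psi|^2=(\Phi_x^2+\Phi_y^2)/W^2=(\Phi_x^2+\Phi_y^2)/(1+\Phi_x^2+\Phi_y^2)<1$, and hence $\widetilde W=\sqrt{1-|\nabla\Psi|^2}=1/W$. This reciprocal relation $W\widetilde W=1$ is the crux: it yields $\Psi_x/\widetilde W=W\Psi_x=-\Phi_y$ and $\Psi_y/\widetilde W=W\Psi_y=\Phi_x$, so the divergence form \eqref{max-div} of the ZMC equation collapses to $\partial_x(-\Phi_y)+\partial_y(\Phi_x)=-\Phi_{yx}+\Phi_{xy}=0$ by equality of mixed partials; thus $\Psi$ solves \eqref{max-ZMC}. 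Part (ii) is entirely parallel: setting $\omega':=(\Psi_y/\widetilde W)\,dx-(\Psi_x/\widetilde W)\,dy$, its closedness is again exactly \eqref{max-div}, the Poincaré lemma produces $\Phi$ satisfying \eqref{max-dual-eq2}, the same algebra gives $1+\Phi_x^2+\Phi_y^2=1/\widetilde W^2$ so that $W=1/\widetilde W$, and \eqref{mini-div} reduces to $\partial_x(\Psi_y)+\partial_y(-\Psi_x)=\Psi_{yx}-\Psi_{xy}=0$.

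I expect no serious obstacle here; the only points requiring care are the sign bookkeeping in the rotated gradients of \eqref{max-dual-eq1} and \eqref{max-dual-eq2}, and the recognition that $W$ and $\widetilde W$ are reciprocals, which is what converts each nonlinear PDE into the trivial commutation of second derivatives. The simple connectivity of $\Omega$ enters exactly once in each direction, to pass from a closed $1$-form to an exact one.
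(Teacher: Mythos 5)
Your proposal is correct and follows essentially the same route as the paper: rewrite the PDE in divergence form, observe that the rotated normalized gradient $1$-form is closed, and invoke the Poincar\'e lemma on the simply connected domain to obtain the dual potential. The only difference is that the paper leaves the final verification as ``easily checked,'' whereas you carry it out explicitly via the reciprocal relation $W\widetilde{W}=1$, which is exactly the right computation.
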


\begin{proof}
This lemma is proved here in reference to the proof of \cite[Lemma 4.1]{Le2023}. 
\begin{enumerate}
\item[(\hspace{.18em}i\hspace{.18em})] From the assumption, $\Phi$ satisfies \eqref{mini-div}. This implies that 
\[
- \dfrac{\Phi_{y}}{\sqrt{1+\Phi^2_{x}+\Phi^2_{y}}}\,dx +\dfrac{\Phi_{x}}{\sqrt{1+\Phi^2_{x}+\Phi^2_{y}}}\,dy 
\]
is closed. By the Poincar\'e lemma, there exists $\Psi \in C^{2} (\Omega, \R)$ such that 
\[
- \dfrac{\Phi_{y}}{\sqrt{1+\Phi^2_{x}+\Phi^2_{y}}}\,dx +\dfrac{\Phi_{x}}{\sqrt{1+\Phi^2_{x}+\Phi^2_{y}}}\,dy = d\Psi = \Psi_{x}\,dx + \Psi_{y}\,dy. 
\] 
Then it can be easily checked that $\Psi$ satisfies $|\nabla \Psi|< 1$, the ZMC equation \eqref{max-ZMC} and \eqref{max-dual-eq1}. 
\item[(\hspace{.08em}ii\hspace{.08em})] From the assumption, $\Psi$ satisfies $|\nabla \Psi|< 1$ and \eqref{max-div}. 
This implies that 
\[
\dfrac{\Psi_{y}}{\sqrt{1-\Psi^2_{x}-\Psi^2_{y}}}\,dx - \dfrac{\Psi_{x}}{\sqrt{1-\Psi^2_{x}-\Psi^2_{y}}}\,dy 
\]
is closed. By the Poincar\'e lemma, there exists $\Phi \in C^{2} (\Omega, \R)$ such that 
\[
\dfrac{\Psi_{y}}{\sqrt{1-\Psi^2_{x}-\Psi^2_{y}}}\,dx - \dfrac{\Psi_{x}}{\sqrt{1-\Psi^2_{x}-\Psi^2_{y}}}\,dy = d\Phi = \Phi_{x}\,dx+\Phi_{y}\,dy. 
\]
Then it can be easily checked that $\Phi$ satisfies the minimal surface equation \eqref{mini-eq3} and \eqref{max-dual-eq2}.  
\end{enumerate} 
\end{proof}

\begin{remark}
The correspondence between $\Phi$ and $\Psi$ in Lemma \ref{max-dual} implies the duality between the potential and the stream function of 
a Chaplygin gas flow. The correspondence is also refereed to as {\it fluid mechanical duality} (\cite{AUY2020}).   
\end{remark}

\begin{proof}[Proof of Theorem \ref{CB-thm1}]
By (\hspace{.08em}ii\hspace{.08em}) of Lemma \ref{max-dual}, there exists $\Phi \in C^2 (\R^2, \R)$ satisfying \eqref{mini-eq3} and \eqref{max-dual-eq2}. 
Then $\Phi$ is a linear function of $x, y$ from the Bernstein theorem (Theorem \ref{min-Bernstein-thm}).  
By \eqref{max-dual-eq2}, $\Psi_{x}$ and $\Psi_{y}$ are constant, that is, $\Psi$ is also a linear function of $x$ and $y$.  
\end{proof}

\subsection{Bernstein-type theorem for CMC space-like graphs in $\L^3$}\label{secL-2} 

In this section, we show a Bernstein-type theorem for CMC space-like graphs in $\L^3$ 
based on the paper \cite{HKKT}. The uniqueness theorem does not hold for entire CMC space-like graphs 
in $\L^3$. In fact, a hyperboloid is an entire CMC space-like graph in $\L^3$ which is not 
a plane. Many other entire CMC space-like graphs are constructed by Treibergs \cite{Tr1982}. 
In \cite[Theorem 2.1]{HKKT}, we give the following Heinz-type mean curvature estimate under an assumption on the gradient bound for space-like graphs in $\L^3$. 

\begin{theorem}\label{HKKT-thm1}
Let $\Delta_{R}$ be the open disk with center at the origin and radius $R\,(>0)$ in $\R^2$, 
and $\Psi (x, y)$ a $C^2$-differentiable function on $\Delta_{R}$. 
Suppose that there exist constants $M>0$ and $k\in \R$ such that 
\begin{equation}\label{HKKT-eq1}
\dfrac{|\nabla \Psi|}{\sqrt{|1-|\nabla \Psi|^2|}}\leq M  (x^2 +y^2)^{k}.  
\end{equation}  
Assume that $\Gamma_{\Psi}$ is a space-like graph of $\Psi$ in $\L^3$.  
If the mean curvature $H$ of $\Gamma_{\Psi}$ satisfies the inequality  
\[
|H|\geq \alpha 
\]
for a positive constant $\alpha$ on $\Delta_{R}$, then the following inequality holds: 
\begin{equation}\label{HKKT-eq2}
\alpha \leq MR^{2k-1}. 
\end{equation}
\end{theorem}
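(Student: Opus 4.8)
The plan is to mirror the proof of the Euclidean Heinz estimate (Theorem~\ref{CMC-Heinz}), replacing the Euclidean quantity $W=\sqrt{1+|\nabla\Phi|^2}$ by its Lorentzian counterpart $\widetilde{W}=\sqrt{1-|\nabla\Psi|^2}$, and then exploiting the gradient bound \eqref{HKKT-eq1} at precisely the step where, in the Euclidean argument, a bound was available for free. First I would express the mean curvature of the space-like graph in divergence form,
\[
H = \frac{1}{2}\,\mathrm{div}\left(\frac{\nabla\Psi}{\sqrt{1-|\nabla\Psi|^2}}\right) = \frac{1}{2}\left\{\frac{\partial}{\partial x}\left(\frac{\Psi_x}{\widetilde{W}}\right) + \frac{\partial}{\partial y}\left(\frac{\Psi_y}{\widetilde{W}}\right)\right\},
\]
which is legitimate because the space-like hypothesis forces $|\nabla\Psi|<1$, so that $\widetilde{W}>0$ on $\Delta_R$. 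Fixing $R_1$ with $0<R_1<R$ and reversing the normal if necessary so that $H\geq\alpha>0$, I would apply the Green theorem on $\overline{\Delta}_{R_1}$ to obtain
\[
\iint_{\overline{\Delta}_{R_1}} 2H\,dx\,dy = \oint_{x^2+y^2=R_1^2}\left(-\frac{\Psi_y}{\widetilde{W}}\,dx + \frac{\Psi_x}{\widetilde{W}}\,dy\right).
\]

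The left-hand side is bounded below by the curvature hypothesis, giving $\iint_{\overline{\Delta}_{R_1}} 2H\,dx\,dy \geq 2\pi\alpha R_1^2$. For the right-hand side the Cauchy-Schwarz inequality, exactly as in Theorem~\ref{CMC-Heinz}, estimates the line integral by
\[
\oint_{x^2+y^2=R_1^2}\frac{|\nabla\Psi|}{\widetilde{W}}\,(dx^2+dy^2)^{1/2}.
\]
This is where the Lorentzian case genuinely differs from the Euclidean one: in $\R^3$ one had $|\nabla\Phi|/W<1$ identically, but in $\L^3$ the quotient $|\nabla\Psi|/\widetilde{W}$ blows up as $|\nabla\Psi|\to 1$, so no universal bound exists and the hypothesis \eqref{HKKT-eq1} is forced to carry the estimate.

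The key step is then to notice that on a space-like graph $|1-|\nabla\Psi|^2|=\widetilde{W}^2$, so that \eqref{HKKT-eq1} reads $|\nabla\Psi|/\widetilde{W}\leq M(x^2+y^2)^k$; on the circle $x^2+y^2=R_1^2$ the right-hand side is the constant $MR_1^{2k}$. Pulling this constant out of the line integral and using $\oint_{x^2+y^2=R_1^2}(dx^2+dy^2)^{1/2}=2\pi R_1$, the right-hand side is at most $2\pi M R_1^{2k+1}$. Combining the two bounds yields $2\pi\alpha R_1^2\leq 2\pi M R_1^{2k+1}$, that is, $\alpha\leq MR_1^{2k-1}$, and letting $R_1\to R$ gives \eqref{HKKT-eq2}. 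The main obstacle, compared with the Euclidean proof, is conceptual rather than computational: one must recognize that the gradient bound \eqref{HKKT-eq1}---and not any intrinsic geometric inequality---is exactly what controls the boundary term, and that the resulting exponent $2k-1$ emerges automatically from balancing the area growth $R_1^2$ against the boundary growth $R_1^{2k+1}$. The passage $R_1\to R$ is harmless for every real $k$, since $R_1^{2k-1}$ is continuous and positive on $(0,R]$.
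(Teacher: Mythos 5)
Your proof is correct and follows essentially the same route as the paper's: the divergence form of $H$, Green's theorem on $\overline{\Delta}_{R_1}$, the lower bound $2\pi\alpha R_1^2$, the Cauchy--Schwarz estimate of the boundary integral, the use of hypothesis \eqref{HKKT-eq1} (with $|1-|\nabla\Psi|^2|=\widetilde{W}^2$ on the space-like graph) to bound it by $2\pi M R_1^{2k+1}$, and the limit $R_1\to R$. The only difference is your added commentary on why the gradient hypothesis is needed in the Lorentzian setting, which is accurate but not part of the paper's argument.
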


\begin{proof}
Since $\Gamma_{\Psi}$ is space-like, $|\nabla \Psi| <1$ holds. 
Then the mean curvature $H$ of $\Gamma_{\Psi}$ is written as 
\[
H = \dfrac{1}{2}\mathrm{div}\biggl{(} \dfrac{\nabla \Psi }{\sqrt{1- |\nabla \Psi|^2}} \biggr{)} = \frac{1}{2}\Biggl{\{}
\dfrac{\partial}{\partial x} \biggl{(} \dfrac{\Psi_x}{\widetilde{W}} \biggr{)} +
\dfrac{\partial}{\partial y} \biggl{(} \dfrac{\Psi_y}{\widetilde{W}} \biggr{)} \Biggr{\}}.  
\]
We take any $R_{1}$ satisfying $0< R_{1}< R$. By the Green theorem, we have  
\begin{equation}\label{Green2}
\displaystyle \iint_{\overline{\Delta}_{R_{1}}}\, 2H \, dxdy = \oint_{x^2 +y^2 =R^2_1} \Biggl{(} -\dfrac{\Psi_{y}}{\widetilde{W}}\, dx + \dfrac{\Psi_{x}}{\widetilde{W}} \, dy \Biggr{)}. 
\end{equation}
Assume that $H\geq \alpha > 0$ by changing the direction of the normal vector. 
Then the left-hand side of \eqref{Green2} becomes 
\[
\displaystyle \iint_{\overline{\Delta}_{R_{1}}}\, 2H \, dxdy \geq 2\pi \alpha R_{1}^{2}. 
\] 
On the other hand, by the Cauchy-Schwarz theorem, the right-hand side of \eqref{Green2} becomes 
\begin{eqnarray}
\oint_{x^2 +y^2 =R^2_1} \Biggl{(} -\dfrac{\Psi_{y}}{\widetilde{W}}\, dx + \dfrac{\Psi_{x}}{\widetilde{W}} \, dy \Biggr{)} &\leq& \oint_{x^2 +y^2 =R^2_1} \dfrac{|\nabla \Psi |}{\sqrt{1-|\nabla \Psi|^2}} (dx^2 + dy^2)^{1/2} \nonumber \\
                                                                                                              &\leq& MR_{1}^{2k} \oint_{x^2 +y^2 =R^2_1} (dx^2 +dy^2)^{1/2}  \nonumber \\
&=& 2\pi M R_{1}^{2k+1}. \nonumber
\end{eqnarray} 
Hence $2\pi \alpha R^2_{1} \leq 2\pi M R_{1}^{2k+1}$ holds. This proof is completed by letting $R_{1}\to R$. 
\end{proof}

By considering the above argument on a relatively compact domain in $\R^2$, we give the following result which was obtained by Salavessa \cite[Theorem 1.5]{Sa2008} for space-like graphs in $\L^3$.  

\begin{proposition}\label{Sal-prop3}
Let $\Omega$ be a relatively compact domain (i.e. its closure $\overline{\Omega}$ is compact) of ${\R}^2$ with smooth boundary $\partial \Omega$.  
Assume that $\Gamma_{\Psi}$ is a space-like graph in $\L^3$ of a $C^2$-differentiable function $\Psi$. Set $m_{\Omega}:= \max_{\overline{\Omega}} |\nabla \Psi|$. For the mean curvature $H$ of $\Gamma_{\Psi}$, we have 
\[
\displaystyle \min_{\overline{\Omega}} |H| \leq \dfrac{1}{2} \dfrac{m_{\Omega}}{\sqrt{1- (m_{\Omega})^2}} \dfrac{L (\partial \Omega)}{A (\overline{\Omega})}. 
\] 
Here $L(\partial \Omega)$ is the length of $\partial \Omega$ and $A(\overline{\Omega})$ is 
the area of $\overline{\Omega}$.  
\end{proposition}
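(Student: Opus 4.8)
The plan is to run the argument from the proof of Theorem \ref{HKKT-thm1} essentially verbatim, replacing the coordinate disk $\overline{\Delta}_{R_{1}}$ by the general domain $\overline{\Omega}$ and replacing the polynomial gradient bound \eqref{HKKT-eq1} by a uniform bound extracted from compactness. First I would record the setup: since $\overline{\Omega}$ is compact and $\Gamma_{\Psi}$ is space-like, the continuous function $|\nabla \Psi|$ satisfies $|\nabla \Psi|<1$ everywhere and attains its maximum $m_{\Omega}<1$ on $\overline{\Omega}$. Hence $\widetilde{W}=\sqrt{1-|\nabla \Psi|^2}$ stays bounded away from $0$ and $\sqrt{1-(m_{\Omega})^2}>0$, so the right-hand side of the asserted inequality is finite and well defined.

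Next I would dispose of the degenerate case. If $H$ vanishes at some point of $\overline{\Omega}$, then $\min_{\overline{\Omega}}|H|=0$ and the inequality holds trivially, its right-hand side being nonnegative. Otherwise $H$ is continuous and nowhere zero on the connected set $\overline{\Omega}$, hence of constant sign; by reversing the direction of the unit normal if necessary, I may assume $H\geq \beta:=\min_{\overline{\Omega}}|H|>0$ throughout $\overline{\Omega}$.

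I would then apply the Green theorem exactly as in \eqref{Green2}, but now over $\overline{\Omega}$ with boundary $\partial \Omega$:
\[
\iint_{\overline{\Omega}} 2H\,dxdy = \oint_{\partial \Omega}\Biggl(-\dfrac{\Psi_{y}}{\widetilde{W}}\,dx + \dfrac{\Psi_{x}}{\widetilde{W}}\,dy\Biggr).
\]
The left-hand side is bounded below by $2\beta\,A(\overline{\Omega})$, since $H\geq \beta$ on $\overline{\Omega}$. For the right-hand side I would invoke the Cauchy-Schwarz inequality precisely as in Theorem \ref{HKKT-thm1} to bound the integrand by $\tfrac{|\nabla \Psi|}{\widetilde{W}}(dx^2+dy^2)^{1/2}$; then, instead of using \eqref{HKKT-eq1}, I would bound $|\nabla \Psi|/\widetilde{W}$ pointwise by $m_{\Omega}/\sqrt{1-(m_{\Omega})^2}$, using that $t\mapsto t/\sqrt{1-t^2}$ is increasing on $[0,1)$ together with $|\nabla \Psi|\leq m_{\Omega}$. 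This gives
\[
\oint_{\partial \Omega}\Biggl(-\dfrac{\Psi_{y}}{\widetilde{W}}\,dx + \dfrac{\Psi_{x}}{\widetilde{W}}\,dy\Biggr) \leq \dfrac{m_{\Omega}}{\sqrt{1-(m_{\Omega})^2}}\oint_{\partial \Omega}(dx^2+dy^2)^{1/2} = \dfrac{m_{\Omega}}{\sqrt{1-(m_{\Omega})^2}}\,L(\partial \Omega).
\]
Combining the two estimates yields $2\beta\,A(\overline{\Omega}) \leq \tfrac{m_{\Omega}}{\sqrt{1-(m_{\Omega})^2}}L(\partial \Omega)$, which is exactly the claim after dividing by $2A(\overline{\Omega})$.

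I expect no serious obstacle, as the proof is a transcription of Theorem \ref{HKKT-thm1}. The only points requiring care are the reduction to the case where $H$ has constant sign, which is what lets me pull $\min_{\overline{\Omega}}|H|$ out of the area integral as a genuine lower bound, and the replacement of the pointwise gradient data by the single uniform constant $m_{\Omega}$ via monotonicity of $t/\sqrt{1-t^2}$. The compactness of $\overline{\Omega}$ is essential here: it guarantees $m_{\Omega}<1$, which keeps the right-hand side finite.
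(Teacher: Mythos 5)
Your proof is correct and is precisely what the paper intends: the paper gives no separate proof of Proposition \ref{Sal-prop3}, merely remarking that it follows "by considering the above argument on a relatively compact domain," and your write-up carries out exactly that transcription of Theorem \ref{HKKT-thm1} (Green's theorem over $\overline{\Omega}$, Cauchy-Schwarz on the boundary integral, and the uniform bound $|\nabla \Psi|/\widetilde{W} \leq m_{\Omega}/\sqrt{1-(m_{\Omega})^2}$ from monotonicity of $t/\sqrt{1-t^2}$). The two points you flag as needing care --- the constant-sign reduction on the connected set $\overline{\Omega}$ and the compactness giving $m_{\Omega}<1$ --- are handled correctly.
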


As a corollary of Theorem \ref{HKKT-thm1}, we give the following vanishing theorem of mean curvature for entire CMC space-like graphs in $\L^3$. 

\begin{corollary}\label{vani-cor4}
Assume that $\Gamma_{\Psi}$ is an entire space-like graph in $\L^3$ of a $C^2$-differentiable function $\Psi (x, y)$. If the entire graph $\Gamma_{\Psi}$ has constant mean curvature and there exist constants $M>0$ and $\varepsilon >0$ such that 
\begin{equation}\label{vani-eq5}
\dfrac{|\nabla \Psi|}{\sqrt{1-|\nabla \Psi|^2}}\leq M \left( x^2 + y^2  \right)^{(1/2) - \varepsilon}
\end{equation}
on ${\R}^2$, then its mean curvature must vanish everywhere.   
\end{corollary}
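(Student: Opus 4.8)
The plan is to deduce this directly from the Heinz-type estimate of Theorem \ref{HKKT-thm1}, by an argument that parallels the passage from Theorem \ref{CMC-Heinz} to Corollary \ref{CMC-Bern}. First I would record that, since $\Gamma_{\Psi}$ has constant mean curvature, the function $H$ is literally a constant $c$; the goal is to show $c=0$. I would argue by contradiction: suppose $c\not=0$ and set $\alpha:=|c|>0$, so that $|H|\geq \alpha$ holds on all of $\R^2$, and in particular on every disk $\Delta_{R}$.

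Next I would match the hypotheses of Theorem \ref{HKKT-thm1}. Because $\Gamma_{\Psi}$ is space-like we have $|\nabla \Psi|<1$ everywhere, so $|1-|\nabla \Psi|^2| = 1-|\nabla \Psi|^2$, and the decay assumption \eqref{vani-eq5} is exactly the bound \eqref{HKKT-eq1} with exponent $k=\tfrac{1}{2}-\varepsilon$. Applying Theorem \ref{HKKT-thm1} on $\Delta_{R}$ then yields, for every $R>0$,
\[
\alpha \leq M R^{2k-1} = M R^{-2\varepsilon},
\]
where the exponent simplifies via $2k-1 = 2\bigl(\tfrac{1}{2}-\varepsilon\bigr)-1 = -2\varepsilon$.

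Finally, since $\varepsilon>0$, letting $R\to +\infty$ forces $M R^{-2\varepsilon}\to 0$, hence $\alpha\leq 0$, contradicting $\alpha>0$. Therefore $c=0$, that is, the mean curvature vanishes everywhere.

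There is no genuine obstacle here beyond Theorem \ref{HKKT-thm1} itself; the proof is a clean limiting argument. The only points requiring care are the following. First, constant mean curvature makes $H$ an honest constant, so a single $\alpha$ works simultaneously on all disks $\Delta_{R}$, and we are free to send $R\to+\infty$. Second, one must check that the chosen exponent $k=\tfrac{1}{2}-\varepsilon$ renders $2k-1=-2\varepsilon$ strictly negative, which is precisely what guarantees the right-hand side decays to zero. The borderline case $\varepsilon=0$ (i.e.\ $k=\tfrac{1}{2}$) would only give the finite bound $\alpha\leq M$, so the strict positivity $\varepsilon>0$ is essential to the conclusion.
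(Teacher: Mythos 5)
Your proof is correct and follows essentially the same route as the paper: both apply Theorem \ref{HKKT-thm1} with exponent $k=\tfrac{1}{2}-\varepsilon$, obtain $|H|\leq MR^{-2\varepsilon}$ on $\Delta_R$, and let $R\to+\infty$. Your phrasing as a contradiction (rather than the paper's direct limiting statement) and your explicit check that $|1-|\nabla\Psi|^2|=1-|\nabla\Psi|^2$ for space-like graphs are only cosmetic differences.
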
 

\begin{proof}
By Theorem \ref{HKKT-thm1}, the mean curvature $H$ of $\Gamma_{\Psi}$ satisfies 
\[
|H|\leq \dfrac{M}{R^{2\varepsilon}}
\]
on $\Delta_{R}$. We obtain $H\equiv 0$ by letting $R\to +\infty$. 
\end{proof}

We give a geometric interpretation for $|\nabla \Psi|/ \sqrt{1-|\nabla \Psi|^2}$. 
When the graph is space-like (i.e., $|\nabla \Psi|< 1$), 
\[
\nu\, (=\nu (x, y)) = \dfrac{1}{\sqrt{1-|\nabla \Psi|^2}} (\Psi_{x}, \Psi_{y}, 1)
\]
is the time-like unit normal vector field of ${\Gamma}_{\Psi}$. Since $e_{3}=(0, 0, 1) \in \L^{3}$ is also time-like, there exists a unique real-valued function $\theta\, (=\theta (x, y))\geq 0$ such that  
$
\langle \nu, e_{3} \rangle_{L} = -\cosh{\theta}. 
$
This function $\theta$ is called the {\it hyperbolic angle} between $\nu$ and $e_{3}$ (see \cite{On1983}). By simple calculation, we have 
\[
\sinh{\theta} = \dfrac{|\nabla \Psi|}{\sqrt{1-|\nabla \Psi|^2}}. 
\]

By Theorem \ref{CB-thm1} and Corollary \ref{vani-cor4}, we obtain the following Bernstein-type theorem for entire CMC space-like graphs in $\L^3$. 

\begin{corollary}\label{Ber-cor6}
If an entire space-like graph $\Gamma_{\Psi}$ in $\L^3$ of a $C^2$-differential function $\Psi (x, y)$ has constant mean curvature and there exist constants $M>0$ and $\varepsilon >0$ such that 
\begin{equation}\label{hyp-eq7}
\sinh{\theta} \leq M (x^2 + y^2 )^{(1/2)-\varepsilon}
\end{equation}
on $\R^2$, then it must be a plane. Here $\theta$ is the hyperbolic angle between $\nu$ and $e_{3}$. 
\end{corollary}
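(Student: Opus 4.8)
The plan is to combine the two results that immediately precede this corollary, namely the vanishing theorem (Corollary \ref{vani-cor4}) and the Calabi-Bernstein theorem (Theorem \ref{CB-thm1}), and to use the geometric identity relating $\sinh\theta$ to the gradient term. The starting observation is that the hypothesis \eqref{hyp-eq7} is written in terms of the hyperbolic angle $\theta$, while Corollary \ref{vani-cor4} is stated in terms of the quantity $|\nabla\Psi|/\sqrt{1-|\nabla\Psi|^2}$. By the computation recorded just above the statement, these two quantities coincide: $\sinh\theta = |\nabla\Psi|/\sqrt{1-|\nabla\Psi|^2}$. Therefore the bound \eqref{hyp-eq7} is literally the bound \eqref{vani-eq5}, and so the translation between the analytic and geometric formulations costs nothing.

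First I would invoke this identity to rewrite \eqref{hyp-eq7} as \eqref{vani-eq5} verbatim, with the same constants $M>0$ and $\varepsilon>0$. Since $\Gamma_{\Psi}$ is assumed to be an entire space-like graph of constant mean curvature, all the hypotheses of Corollary \ref{vani-cor4} are met. Applying that corollary yields that the mean curvature $H$ of $\Gamma_{\Psi}$ vanishes identically on $\R^2$; in other words, $\Gamma_{\Psi}$ is an entire maximal space-like graph in $\L^3$.

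Second, having reduced to the maximal case, I would apply the Calabi-Bernstein theorem (Theorem \ref{CB-thm1}). Because $\Gamma_{\Psi}$ is space-like we have $|\nabla\Psi|<1$, and because $H\equiv 0$ the function $\Psi$ satisfies the zero mean curvature equation \eqref{max-ZMC}. These are exactly the hypotheses of Theorem \ref{CB-thm1}, so $\Psi$ must be a linear function of $x$ and $y$. Consequently its graph $\Gamma_{\Psi}$ is a plane in $\L^3$, which is the desired conclusion.

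Since both ingredients are already established earlier in the excerpt, there is no genuine obstacle: the only thing to verify is the translation step, i.e.\ that the geometric growth condition on $\sinh\theta$ is the same as the analytic growth condition to which Corollary \ref{vani-cor4} applies, which is immediate from the identity $\sinh\theta = |\nabla\Psi|/\sqrt{1-|\nabla\Psi|^2}$. The mild subtlety worth noting is simply to confirm that the hypotheses line up exactly (same exponent $(1/2)-\varepsilon$, same role of $M$), so that Corollary \ref{vani-cor4} can be quoted without any further estimate; once that is checked the proof is a two-line chaining of the two cited results.
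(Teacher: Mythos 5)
Your proof is correct and is exactly the argument the paper intends: the identity $\sinh\theta = |\nabla\Psi|/\sqrt{1-|\nabla\Psi|^2}$ makes hypothesis \eqref{hyp-eq7} identical to \eqref{vani-eq5}, so Corollary \ref{vani-cor4} gives $H\equiv 0$ and Theorem \ref{CB-thm1} then forces $\Psi$ to be linear. The paper states this corollary as an immediate consequence of those two results, so your chaining matches it precisely.
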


Corollary \ref{Ber-cor6} is optimal, since there exists an example which is not congruent to a plane and satisfies \eqref{hyp-eq7} for $\varepsilon =0$. In fact, 
the function $\Psi$ given by 
\[
\Psi (x, y) = \left( x^2 + y^2 + \dfrac{1}{H^2} \right)^{1/2} \quad (H>0)
\]
is not linear and its graph is a hyperboloid and has constant mean curvature $H$. 
Moreover, it satisfies 
\[
\sinh{\theta} = H \sqrt{x^2 +y^2},  
\] 
which is the equality in \eqref{hyp-eq7} for $\varepsilon =0$. 



\begin{thebibliography}{99}

\bibitem{AUY2020}
S. Akamine, M. Umehara, K. Yamada, Improvement of the Bernstein-type theorem for space-like zero mean curvature graphs in Lorentz-Minkowski space using fluid mechanical duality, 
Proc. Amer. Math. Soc. Ser. B {\bf 7} (2020), 17--27.

\bibitem{AP2001}
L. J. Al\'ias, B. Palmer, A duality result between the minimal surface equation and the maximal surface equation, An. Acad. Brasil. Ci\^enc. {\bf 73} (2001), 161--164.

\bibitem{Al1966}
F. Almgren, Some interior regularity theorems for minimal surfaces and an extension of Bernstein's theorem, 
Ann. of Math. {\bf 84} (1966), 277--292.

\bibitem{Be1915}
S. Bernstein, Sur un th\'eor\`eme de g\'eom\'etrie et ses applications aux \'equations aux d\'eriv\'ees partielles du type elliptique, Comm.~de la Soc.~Math. de Kharkov (2\'eme s\'er.) {\bf 15}, 38--45 (1915--1917). 

\bibitem{Be1927}
S. Bernstein, $\ddot{\text{U}}$ber ein geometrisches Theorem und seine Anwendung auf die partiellen Differentialgleichungen vom elliptischen Typus, Math. Z. {\bf 26} (1927), 551--558. 

\bibitem{BGG1969}
E. Bombieri, E. De Giorgi, E. Giusti, Minimal cones and the Bernstein problem, 
Inventiones Math. {\bf 7} (1969), 243--269. 

\bibitem{Ca1970}
E. Calabi, 
Examples of Bernstein problems for some nonlinear equations, 
In: Global Analysis (Proc. Sympos. Pure Math., Vol.X\hspace{-.1em}V Berkeley, CA, 1968), Amer. Math. Soc., Providence, 223--230. 

\bibitem{CY1976} 
S.-Y. Cheng, S.-T. Yau,  
Maximal spacelike hypersurfaces in the Lorentz-Minkowski spaces, 
Ann. of Math. {\bf 104} (1976), 407--419.  

\bibitem{Ch1965}
S. -S. Chern, 
On the curvatures of a piece of hypersurface in euclidean space, 
Abh. Math. Sem. Univ. Hamburg. {\bf 29} (1965), 77--91.  

\bibitem{Fa2007}
A. Farina, Liouville-type theorems for elliptic problems.Handbook of differential equations: stationary partial differential equations. Vol. I\hspace{-1.2pt}V, 61--116, Elsevier/North-Holland, Amsterdam, 2007. 

\bibitem{Fl1966}
H. Flanders, Remark on mean curvature, 
J. London Math. Soc. {\bf 41} (1966), 364--366.  

\bibitem{Fl1962}
W.H. Fleming, On the oriented Plateau problem, 
Rend. Circ. Mat. Palermo (2), {\bf 11} (1962), 69--90. 

\bibitem{Fu1998}
L. Fu, An analogue of Bernstein's theorem, 
Houston J. Math. {\bf 24} (1998), 415--419.

\bibitem{Fu1988}
H. Fujimoto, On the number of exceptional values of the Gauss maps of minimal surfaces, 
J. Math. Soc. Japan {\bf 40} (1988), 235--247. 

\bibitem{Gi1965}
E. De Giorgi, Una estensione del teorema di Bernstein, 
Ann. Scuola Norm. Sup. Pisa Cl. Sci. {\bf 19} (1965), 79--85.

\bibitem{HL1982}
R. Harvey, H. B. Lawson, Jr., Calibrated geometries, Acta Math. {\bf 148} (1982), 47--157. 

\bibitem{HHV2009}
Th. Hasanis, A. Savas-Halilaj, Th. Vlachos, Minimal graphs in $\mathbb{R}^4$ with bounded Jacobians,  
Proc. Amer. Math. Soc. {\bf 137} (2009), 3463--3471.

\bibitem{HHV2011}
Th. Hasanis, A. Savas-Halilaj, Th. Vlachos, On the Jacobian of minimal graphs in $\mathbb{R}^4$, 
Bull. Lond. Math. Soc. {\bf 43} (2011), 321--327.

\bibitem{He1952}
E. Heinz, \"Uber die L\"osungen der Minimalfl\"achengleichung, 
Nachr. Akad. Wiss. G\"ottingen. Math.-Phys. Kl. Math.-Phys.-Chem. Abt. (1952), 51---56.

\bibitem{He1955}
E. Heinz, \"Uber Fl\"achen mit eineindeutiger Projektion auf eine Ebene, deren Kr\"ummungen durch Ungleichungen eingeschr\"ankt sind, 
Math. Ann. {\bf 129} (1955), 451--454.   

\bibitem{HKKT}
A. Honda, Y. Kawakami, M. Koiso, S. Tori, 
Heinz-type mean curvature estimates in Lorentz-Minkowski space, 
Rev. Mat. Complut. {\bf 34} (2021), 641---651.

\bibitem{Ho1950}
E. Hopf, On S. Bernstein's theorem on surfaces $z(x, y)$ of nonpositive curvature, 
Proc. Amer. Math. Soc. {\bf 1} (1950), 80--85. 

\bibitem{Jo1954}
K. J\"orgen, \"Uber die L\"osungen der Differentialgleichung $rt-s^2$, 
Math. Ann. {\bf 127} (1954), 130--134.

\bibitem{KK2024}
S. Kasao, Y. Kawakami, Bloch-Ros principle and its application to surface theory, 
Bull. Malays. Math. Sci. Soc. {\bf 48} (2025), Paper No. 127, 36pp. 

\bibitem{Ka2013}
Y. Kawakami, On the maximal number of exceptional values of Gauss maps for various classes of surfaces, 
Math. Z. {\bf 274} (2013), 1249--1260. 

\bibitem{KKM2008} 
Y. Kawakami, R. Kobayashi, R.~Miyaoka,  
The Gauss map of pseudo-algebraic minimal surfaces, 
Forum Math. {\bf 20} (2008), 1055---1069.

\bibitem{KW2024} 
Y. Kawakami, M. Watanabe, The Gauss images of complete minimal surfaces of genus zero of finite total curvature, 
J. Geom. Anal. {\bf 34} (2024), Paper No. 270, 22pp. 

\bibitem{Ke2003}
K. Kenmotsu, Surfaces with constant mean curvature, Translated from the 2000 Japanese original by Katsuhiro Moriya and revised by the author, 
Translations of Mathematical Monographs, Vol. 221, Am. Math. Soc., Providence, 2003.  

\bibitem{Ko2021}
S. Kobayashi, Differential geometry of curves and surfaces, 
Springer Undergrad. Math. Ser., Springer, 2021. 

\bibitem{Le2011}
H. Lee, Extensions of the duality between minimal surfaces and maximal surfaces, 
Geom. Dedicata {\bf 151} (2011), 373--386.

\bibitem{Le2013}
H. Lee, Minimal surface systems, maximal surface systems and special Lagrangian equations, 
Trans. Amer. Math. Soc. {\bf 365} (2013), 3775---3797.

\bibitem{Le2017}
H. Lee, A one-sentence proof of the Bernstein type theorem for special Lagrangian equation in two dimensions, 
preprint, arXiv: 1712.01692. 

\bibitem{Le2023}
H. Lee, Minimal surface system in Euclidean four-space, 
J. Korean Math. Soc. {\bf 60} (2023), 71--90.

\bibitem{Mi1950}
E. Mickle, A remark on a theorem of Serge Bernstein, 
Proc. Amer. Math. Soc. {\bf 1} (1950), 86--89. 

\bibitem{Ni1957}
J. C. C. Nitsche, Elementary proof of Bernstein's theorem on minimal surfaces, 
Ann. of Math. {\bf 66} (1957), 543--544.

\bibitem{Ni1989}
J. C. C. Nitsche, Lectures on minimal surfaces. Vol. 1, 
Cambridge University Press, Cambridge, 1989. 

\bibitem{On1983}
B. O'Neill,  
Semi-Riemannian geometry with applications to relativity, Academic Press, London, 1983.    

\bibitem{Os1964}
R. Osserman, 
Global properties of minimal surfaces in $E^{3}$ and $E^{n}$, 
Ann. of Math. {\bf 80} (1964), 340--364. 

\bibitem{Os1984}
R. Osserman, 
The minimal surface equation, Seminar on nonlinear partial differential equations (Berkeley, Calif., 1983), 237--259.
Math. Sci. Res. Inst. Publ., 2, Springer-Verlag, New York, 1984. 

\bibitem{Os1986}
R. Osserman,
A survey of minimal surfaces, 
Second edition, Dover Publications, Inc., New York, 1986.

\bibitem{Ro2001}
A. Ros, The Gauss map of minimal surfaces, 
Differential Geometry, Valencia 2001, Proceedings of the conference in honour of Antonio M. Naveira, 
edited by O. Gil-Medrano and V. Miquel, World Scientific, 2002, 235--252.

\bibitem{Sa2008}
I. M. C. Salavessa,   
Spacelike graphs with parallel mean curvature, 
Bull. Belg. Math. Soc. Simon Stevin {\bf 15} (2008), 65---76.   

\bibitem{Sc1993}
R. Schoen, The role of harmonic mappings in rigidity and deformation problems, Complex geometry (Osaka, 1990), 179-200.
Lecture Notes in Pure and Appl. Math., {\bf 143}, Marcel Dekker, Inc., New York, 1993. 

\bibitem{Sh1956}
M. Shiffman, 
On surfaces of stationary area bounded by two circles, or convex curves, in parallel planes, 
Ann. of Math. {\bf 63} (1956), 77--90.

\bibitem{Si1968}
J. Simons, Minimal varieties in riemannian manifolds, Ann. of Math. {\bf 88} (1968), 62--105.

\bibitem{Tr1982}
A. Treibergs,    
Entire spacelike hypersurfaces of constant mean curvature in Minkowski space,  
Invent. Math. {\bf 66} (1982), 39--56.  

\bibitem{Yu2002}
Y. Yuan, 
A Bernstein problem for special Lagrangian equations, 
Invent. Math. {\bf 150} (2002), 117--125. 

\end{thebibliography}
\end{document}